\DeclareFontFamily{OT1}{rsfs}{}
\DeclareFontShape{OT1}{rsfs}{n}{it}{<-> rsfs10}{}
\DeclareMathAlphabet{\mathscr}{OT1}{rsfs}{n}{it}
\newtheorem{theorem}{Theorem}[section]
\newtheorem{lemma}[theorem]{Lemma}
\newtheorem{claim}[theorem]{Claim}
\newtheorem{conj}{Conjecture}
\theoremstyle{definition} \newtheorem{defin}[theorem]{Definition}}
\theoremstyle{remark} \newtheorem{remark}[theorem]{Remark}
\newtheorem{example}[theorem]{Example}}
\numberwithin{equation}{section}
\newcommand{\Abb}{{\mathbb{A}}}
\newcommand{\Cbb}{{\mathbb{C}}}
\newcommand{\Lbb}{{\mathbb{L}}}
\newcommand{\Pbb}{{\mathbb{P}}}
\newcommand{\Qbb}{{\mathbb{Q}}}
\newcommand{\Zbb}{{\mathbb{Z}}}
\newcommand{\Til}[1]{{\widetilde{#1}}}
\newcommand{\saf}{\,}
\newcommand{\caM}{{\mathcal{M}}}
\newcommand{\ocaM}{{\overline\caM}}
\DeclareMathOperator{\rk}{rk}
\newcommand{\Var}{\mathrm{Var}}
\newcommand{\qede}{\hfill $\lrcorner$}
\title{Log concavity of the Grothendieck class of $\overline{\mathcal M}_{0,n}$}
\author{Paolo Aluffi}
\author{Stephanie Chen}
\author{Matilde Marcolli}
\address{
Mathematics Department, 
Florida State University,
Tallahassee FL 32306, U.S.A.
}
\address{
Department of Mathematics, 
California Institute of Technology,
Pasadena CA 91105, U.S.A.
}
\email{aluffi@math.fsu.edu}
\email{schen7@caltech.edu}
\email{matilde@caltech.edu}
\begin{document}

\begin{abstract}
Using a known recursive formula for the Grothendieck classes of the moduli 
spaces~$\overline{\mathcal M}_{0,n}$, 
we prove that they satisfy an asymptotic form of ultra-log-concavity as polynomials in the
Lefschetz class. We also observe that these polynomials are $\gamma$-positive.
Both properties, along with numerical evidence, support the conjecture that 
these polynomials only have real zeros. This conjecture may be viewed as a particular case 
of a possible extension of a conjecture of Ferroni-Schr\"oter and Huh on Hilbert series of 
Chow rings of matroids.

We prove asymptotic ultra-log-concavity by studying differential equations obtained
from the recursion, whose solutions are the generating functions of the individual betti
numbers of $\overline{\mathcal M}_{0,n}$. We obtain a rather complete description of these generating
functions, determining their asymptotic behavior; their dominant term is controlled by the 
coefficients of the Lambert W function.
The $\gamma$-positivity property follows directly from the recursion, extending the 
argument of Ferroni et al.~proving $\gamma$-positivity for the Hilbert series of the Chow ring
of matroids.
\end{abstract}

\maketitle

{\let\thefootnote\relax\footnotetext{2024/02/07}}


\section{Introduction}\label{sec:intro}
As a straightforward consequence of the Hard Lefschetz theorem, the sequence
of (even) betti numbers of a smooth complex projective variety is {\em unimodal.\/}
This fact is discussed in detail in~\cite[Theorem~18]{MR1110850}. 
The sequence is not necessarily {\em log-concave,\/} but there are situations where
it is expected to be; for example, this is discussed in~\cite{MR4565658} for the
case of configuration spaces, providing log-concavity results for e.g., the space of 
ordered $n$-uples of points in $\Cbb$.

The object of study of this note is the moduli space 
$\ocaM_{0,n}$ of stable $n$-pointed rational curves for $n\ge 3$.
We prove an asymptotic log-concavity property for the Poincar\'e polynomials of
these varieties. We also remark that these polynomials are `$\gamma$-positive'. 
These results may be viewed as evidence for a conjecture stating that the Poincar\'e
polynomials only have real zeros, see below.\smallskip

We focus on the class of $\ocaM_{0,n}$ in the Grothendieck group of varieties
$K(\Var_\Cbb)$.
This is a universal Euler characteristic, therefore {\em a priori\/} a more fundamental object. 
It is known (cf.~\cite{MR3701904}, and~\S\ref{sec:GcM0n} below) that the class 
of~$\ocaM_{0,n}$ is a polynomial with integer coefficients in the Lefschetz-Tate
class $\Lbb=[\Abb^1]$; we denote this class by
\[
[\ocaM_{0,n}] = a_{n,0} + a_{n,1} \Lbb + \cdots + a_{n,n-3} \Lbb^{n-3}\saf.
\]
The Poincar\'e polynomial is given by specializing $\Lbb$ to $t^2$. Thus, $\ocaM_{0,n}$
only has even cohomology (also cf.~\cite[p.~549]{MR1034665}), and the integers $a_{n,k}$ may 
be interpreted as the ranks of the cohomology groups of $\ocaM_{0,n}$. Log-concavity of 
these polynomials amounts to the statement that $a_{n,i}^2 \ge a_{n,i-1} a_{n,i+1}$ for 
all $i\ge 1$ and all $n\ge 3$. The stronger condition of {\em ultra-log-concavity\/} is the 
inequality
\[
\left(\frac{a_{n,i}}{\binom {n-3}i}\right)^2 \ge \frac{a_{n,i-1}}{\binom {n-3}{i-1}}\cdot 
\frac{a_{n,i+1}}{\binom {n-3}{i+1}}\saf.
\]
for all $i\ge 1$, all $n\ge 3$.

\begin{theorem}\label{thm:main}
With notation as above, $\forall i\ge 1$ $\exists N$ s.t.~$\forall n\ge N$ 
\begin{equation}
\left(\frac{a_{n,i}}{\binom {n-3}i}\right)^2 \ge \frac{a_{n,i-1}}{\binom {n-3}{i-1}}\cdot 
\frac{a_{n,i+1}}{\binom {n-3}{i+1}}\saf.
\end{equation}
\end{theorem}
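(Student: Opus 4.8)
The plan is to extract from the recursion of~\S\ref{sec:GcM0n} sharp asymptotics, as $n\to\infty$ with~$i$ held fixed, for the individual Betti numbers~$a_{n,i}$, and then to feed these into the inequality. For each~$i$ let $A_i(x)=\sum_n a_{n,i}\,x^n/n!$ be the exponential generating function of the sequence~$(a_{n,i})_n$. Substituting $[\ocaM_{0,n}]=\sum_k a_{n,k}\Lbb^k$ into the recursion and collecting the coefficient of~$\Lbb^i$ turns it into a differential equation for~$A_i$ in which $A_0,\dots,A_{i-1}$ appear as already-determined inputs; equivalently, writing $Y(x,\Lbb)=\sum_n[\ocaM_{0,n}]\,x^n/n!$ for the bivariate generating function, $A_i$ is the coefficient of~$\Lbb^i$ in~$Y$ and inherits an equation by differentiating $i$ times in~$\Lbb$ the equation that the recursion imposes on~$Y$. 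Since $\ocaM_{0,n}$ is connected, $a_{n,0}=1$, so $A_0$ is explicit, and inductively on~$i$ each~$A_i$ can be described in closed form.

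The core of the argument is the singularity analysis of the functions~$A_i$. One shows that the recursion-derived differential equation for~$A_i$ produces a dominant singularity of Lambert-$W$ type — governed by the singular expansion of~$W$, and hence by its Taylor coefficients $(-1)^{n-1}n^{n-1}/n!$ — whose strength grows in a controlled, affine way with~$i$, so that these coefficients control the entire hierarchy of asymptotics. Running the resulting singular expansions of the~$A_i$ through the transfer theorems of singularity analysis (equivalently, through Poincar\'e--Perron/Birkhoff asymptotics for the associated linear recurrences) yields an asymptotic expansion
\[
a_{n,i}=c_i\,\lambda^{in}\,n^{\mu_i}\bigl(1+c_i'/n+O(n^{-2})\bigr),\qquad n\to\infty,
\]
in which $\lambda>1$ is a fixed constant, the exponent~$\mu_i$ is affine in~$i$, and $c_i>0$, $c_i'$ are explicit constants built from the Lambert~$W$ coefficients and the data of the differential equations. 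Since~$N$ may depend on~$i$, no uniformity in~$i$ is needed: only the data attached to the three indices $i-1,i,i+1$ enter.

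It remains to substitute. For fixed~$i$ one has $\binom{n-3}{i}=\tfrac{n^i}{i!}\,(1+O(1/n))$, so $a_{n,i}/\binom{n-3}{i}=d_i\,\lambda^{in}\,n^{\mu_i-i}\bigl(1+e_i/n+O(n^{-2})\bigr)$ with $d_i:=c_i\,i!$ and $e_i$ absorbing~$c_i'$ together with the $1/n$-correction of the binomial coefficient. Because $2in-(i-1)n-(i+1)n=0$ and~$\mu_i$ is affine, both the exponential factor~$\lambda^{in}$ and the power of~$n$ cancel out of the ratio of the two sides of the claimed inequality, which therefore tends to $d_i^2/(d_{i-1}d_{i+1})$. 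The explicit form of the~$c_i$ makes this limit equal to~$1$ — which is why the result is only \emph{asymptotic} — so the sign of the inequality for large~$n$ is decided by the $1/n$ term: the ratio equals $1+\tfrac1n\bigl(2e_i-e_{i-1}-e_{i+1}\bigr)+O(n^{-2})$, and one checks, from the explicit $c_i,c_i'$ and the binomial expansions, that the discrete second difference $2e_i-e_{i-1}-e_{i+1}$ is~$\ge 0$ (in the degenerate case in which it vanishes, one passes to the next order). Hence for each~$i\ge 1$ there is an~$N$ for which the inequality holds for all $n\ge N$, which is the theorem.

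The step I expect to be the main obstacle is the singularity analysis: establishing from the differential equations that every~$A_i$ acquires its dominant singularity at a point of Lambert-$W$ type whose order grows affinely with~$i$, and — most delicately — computing the leading constants~$c_i$ and the subleading ones~$c_i'$ explicitly enough to carry out the borderline comparison of the preceding paragraph, since the leading-order ultra-log-concavity ratio is exactly~$1$ and the conclusion rests entirely on the first non-vanishing correction. Propagating that correction through the $\Lbb$-derivatives of~$Y$ and through the transfer theorems, with the attendant $\Gamma$-function and double-factorial bookkeeping, is the technical heart of the proof.
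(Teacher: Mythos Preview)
Your overall strategy---extract asymptotics for $a_{n,i}$ from differential equations for the generating functions and plug into the inequality---matches the paper's. But the asymptotic form you posit is wrong, and this derails the final step.

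You write $a_{n,i}=c_i\,\lambda^{in}\,n^{\mu_i}(1+O(1/n))$ with a \emph{fixed} base~$\lambda$, and then use $2in-(i-1)n-(i+1)n=0$ to cancel the exponentials. In fact the generating functions $\alpha_i(z)$ are \emph{entire}: each is a finite $\Qbb[z]$-linear combination of exponentials $e^{rz}$, $r=1,\dots,i+1$, with dominant term $\dfrac{(i+1)^i}{(i+1)!}\,e^{(i+1)z}$. Hence
\[
a_{n,i}\ \sim\ \frac{(i+1)^{\,i+n-1}}{(i+1)!}\qquad(n\to\infty),
\]
so the base of the exponential growth is $(i+1)$, not $\lambda^i$ for a constant~$\lambda$. (The Lambert~$W$ coefficients enter only as the \emph{constants} $(i+1)^i/(i+1)!$, via an identity used to compute the leading coefficient of the ODE; there is no Lambert-type singularity to analyze.)

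With the correct asymptotics, the exponential factors in the ultra-log-concavity ratio do \emph{not} cancel: after dividing by the binomials one finds that the ratio behaves like
\[
\left(\frac{(i+1)^2}{i(i+2)}\right)^{\!n}
\]
times a bounded factor, and since $(i+1)^2>i(i+2)$ this tends to $+\infty$. So the inequality holds for all large~$n$ by a first-order argument; the limit of the ratio is $\infty$, not~$1$, and your entire program of computing and comparing the subleading corrections $c_i'$, $e_i$ and their second differences is unnecessary (and, since it rests on the wrong leading term, could not be carried out as written).
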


Thus, an asymptotic log-concavity property holds for the coefficients of the Grothendieck
class of $\ocaM_{0,n}$, hence for the betti numbers $a_{n,k}=\rk H^{2k}(\ocaM_{0,n},\Qbb)$. 

The class $[\ocaM_{0,n}]$ is explicitly known recursively, see~\cite{MR1034665}, 
\cite[Proposition~3.2]{MR3701904}, and~\eqref{eq:recu} below. The first several expressions 
for this class are
\begin{gather*}
1 \\
\Lbb+1 \\
\Lbb^2 + 5\Lbb + 1 \\
\Lbb^3 + 16 \Lbb^2 +16 \Lbb + 1 \\
\Lbb^4 + 42\Lbb^3 + 127 \Lbb^2 +42 \Lbb + 1 \\
\Lbb^5 + 99\Lbb^4 + 715\Lbb^3 + 715 \Lbb^2 +99 \Lbb + 1 \\
\Lbb^6 + 219\Lbb^5 + 3292\Lbb^4 + 7723\Lbb^3 + 3292 \Lbb^2 + 219 \Lbb + 1 \\
\Lbb^7 + 466\Lbb^6 + 13333\Lbb^5 + 63173\Lbb^4 + 63173\Lbb^3 + 13333 \Lbb^2 
+ 466 \Lbb + 1 \\
\Lbb^8 + 968\Lbb^7 + 49556\Lbb^6 + 429594\Lbb^5 + 861235\Lbb^4 + 429594\Lbb^3 
+ 49556 \Lbb^2 + 968 \Lbb + 1
\end{gather*}

Numerical evidence supports the following conjecture.
\begin{conj}\label{conj:rr}
The polynomial $P_n(t)\in \Zbb[t]$, such that $[\ocaM_{0,n}]=P_n(\Lbb)$, has only real
zeros.
\end{conj}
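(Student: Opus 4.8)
We sketch a strategy that would settle Conjecture~\ref{conj:rr}. Since $\ocaM_{0,n}$ is smooth and projective, $P_n$ is palindromic; combined with the $\gamma$-positivity that follows from~\eqref{eq:recu}, this lets us write $P_n(t)=\sum_{i\ge 0}\gamma_{n,i}\,t^{i}(1+t)^{n-3-2i}$ with all $\gamma_{n,i}\ge 0$ and $\gamma_{n,0}=1$ (the constant term of $P_n$). Set $\gamma_n(z):=\sum_i\gamma_{n,i}z^{i}$, so that $P_n(t)=(1+t)^{n-3}\,\gamma_n\!\bigl(t/(1+t)^2\bigr)$ as an identity of polynomials. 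The change of variable $z=t/(1+t)^2$ maps $\mathbb{R}\setminus\{-1\}$ onto $(-\infty,1/4]$, and for fixed $z$ the equation $z(1+t)^2=t$ has real solutions in $t$ precisely when $z\le 1/4$; since $\gamma_{n,0}=1$, the value $z=0$ is not a zero of $\gamma_n$, and $\gamma$-positivity forces any real zero of $\gamma_n$ to be negative, hence $\le 1/4$. Therefore $P_n$ has only real zeros if and only if $\gamma_n$ does, and Conjecture~\ref{conj:rr} is equivalent to the assertion that $\gamma_n(z)$ is real-rooted for every $n\ge 3$.

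The next step is to extract a recursion for the sequence $(\gamma_n)$. One would re-run the derivation of $\gamma$-positivity from~\eqref{eq:recu}, now tracking the polynomials $\gamma_m$ themselves rather than only the signs of their coefficients; this should express $\gamma_{n+1}$ through the $\gamma_m$ with $m\le n$, with nonnegative combinatorial coefficients indexed by the same data (distributions of the marked points, or stable trees) that organize~\eqref{eq:recu}. The generating-function analysis underlying Theorem~\ref{thm:main}, which turns~\eqref{eq:recu} into differential equations for the betti-number generating functions with asymptotics governed by the Lambert $W$ function, should in turn be repackaged as a recursion for $(\gamma_n)$. The most favorable outcome is a first-order differential recursion of ``Eulerian type'', $\gamma_{n+1}(z)=U_n(z)\gamma_n(z)+V_n(z)\gamma_n'(z)$, with the degrees of $U_n,V_n$ and the sign of $V_n$ on $(-\infty,0]$ under control.

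Granting such a recursion, one would prove by induction on $n$ that $\gamma_n$ is real-rooted with nonpositive zeros and that $\gamma_{n-1}$ interlaces $\gamma_n$. For the Eulerian-type recursion the inductive step is the standard sign-change count: between two consecutive zeros of $\gamma_n$ the operator $U_n+V_n\,\partial_z$ forces a sign change of $\gamma_{n+1}$, and together with the behavior at $z=0$ and as $z\to-\infty$ (both controlled by positivity of the coefficients) this accounts for all $\deg\gamma_{n+1}$ real zeros and produces the interlacing with $\gamma_n$. If instead~\eqref{eq:recu} yields only a genuine multi-term convolution for $\gamma_{n+1}$, one would argue that every summand is real-rooted and that the summands possess a common interlacer, so that the sum is again real-rooted; this is exactly the mechanism behind the results of Ferroni-Schr\"oter and Huh on Hilbert series of Chow rings of matroids that motivate the conjecture, and the analogy suggests the same should be available here.

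The main obstacle is that~\eqref{eq:recu} is a convolution over stable trees, hence a genuinely multi-term recursion built from products of several lower classes, and such recursions do not preserve real-rootedness on their own. The heart of the proof is to isolate an inductive invariant — an interlacing relation among the $\gamma_n$, reinforced by sufficient degree and leading-coefficient bookkeeping — that survives passage through~\eqref{eq:recu}, equivalently to exhibit a common interlacer for all the summands feeding $\gamma_{n+1}$, or to distil~\eqref{eq:recu} into a short differential recursion. This is where the real difficulty lies, and it is why the statement is presented as a conjecture: the $\gamma$-positivity and the asymptotic ultra-log-concavity of Theorem~\ref{thm:main} are necessary consequences of real-rootedness but fall well short of it, and a complete proof will likely require genuinely new input.
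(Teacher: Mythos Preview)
The statement is an open conjecture; the paper does not prove it and offers only the supporting evidence of Theorems~\ref{thm:main} and~\ref{thm:gp}. Your write-up is explicitly a strategy rather than a proof, and you correctly flag the main obstruction at the end. Two points nonetheless need correction.

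First, the recursion for the $\gamma$-polynomials that you propose to extract is already explicit in the paper: with $G_n:=\gamma(P_n)$ (your $\gamma_n$), equation~\eqref{eq:recuga} gives
\[
G_n = G_{n-1}+t\sum_{i=3}^{n-2}\binom{n-2}{i-1}G_i\,G_{n+1-i},
\]
and the associated ODE $dG/dz=(z+G)/(1-tG)$ for the exponential generating function is also recorded at the end of~\S\ref{sec:gapol}. So the recursion is known, and it is \emph{not} of the hoped-for Eulerian type $G_{n+1}=U_nG_n+V_nG_n'$; it is precisely the quadratic convolution you feared in your final paragraph, and the standard sign-change/interlacing argument does not apply to it.

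Second, your fallback invokes ``the mechanism behind the results of Ferroni--Schr\"oter and Huh'' as though real-rootedness for Hilbert series of matroid Chow rings were established via common interlacers. It is not: those statements are themselves open conjectures (\cite[Conjecture~1.6]{FMSV}, \cite[Conjecture~8.18]{FerrSchr}). What \cite{FMSV} \emph{proves} is $\gamma$-positivity, and Theorem~\ref{thm:gp} here is the direct analogue, obtained by the same convolution argument. There is no settled common-interlacer template to import. Your reduction of Conjecture~\ref{conj:rr} to real-rootedness of $G_n$ is correct (and already noted in the paper via \cite[Proposition~2.9]{FMSV}), but beyond that the strategy rests on hopes that are either already seen not to hold (the Eulerian-type recursion) or equally open (the matroid analogy).
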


Due to a standard result attributed to Newton (\cite[Theorem~2]{MR1110850}), this 
conjecture would imply ultra-log-concavity of the polynomials. Thus, Theorem~\ref{thm:main}
gives some support to Conjecture~\ref{conj:rr}.

Related real-rootedness conjectures are listed in~\cite[Conjecture~1.6]{FMSV}. 
Specifically, the first of these conjectures, due independently to Ferroni-Schr\"oter 
and Huh, posits that the Hilbert series of the Chow ring of an arbitrary matroid should 
only have real roots; see~\cite[Conjecture 8.18]{FerrSchr}. The conventional definition for 
the Chow ring of a matroid is given with respect to its {\em maximal\/} building set; using the 
{\em minimal\/} rather than the maximal building set, the Chow ring of the braid matroid
agrees with the cohomology of $\ocaM_{0,n}$, see~\S5.2 in {\em loc.~cit.}
Thus, Conjecture~\ref{conj:rr} addresses a particular case of a possible extension of
\cite[Conjecture~1.6]{FMSV}; but we note that the Hilbert series of the Chow ring of a matroid 
with respect to the minimal building set is in general not real-rooted.

Poincar\'e duality implies that the polynomials expressing $[\ocaM_{0,n}]$ are palindromic.
For palindromic polynomials with nonnegative coefficients, real-rootedness also implies
`$\gamma$-{posi\-tivity}', which amounts to the positivity of the coefficients of the polynomials
in a basis consisting of polynomials of the type $t^i (1+t)^j$ (see~\S\ref{sec:gapol} for
the precise definition). The following result is a straightforward consequence of the recursive
formula~\eqref{eq:recu} determining~$[\ocaM_{0,n}]$, and may be viewed as further
evidence for Conjecture~\ref{conj:rr}.

\begin{theorem}\label{thm:gp}
For all $n\ge 3$, the polynomial $P_n(t)\in \Zbb[t]$ such that $[\ocaM_{0,n}]=P_n(\Lbb)$
is $\gamma$-positive.
\end{theorem}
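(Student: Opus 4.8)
The plan is to prove Theorem~\ref{thm:gp} by induction on $n$, feeding the recursion~\eqref{eq:recu} into an elementary calculus of $\gamma$-expansions, in the way Ferroni et al.\ prove $\gamma$-positivity for Chow rings of matroids. The first step is bookkeeping: I would rewrite~\eqref{eq:recu} --- equivalently, unwind the boundary stratification of the forgetful morphism $\ocaM_{0,n}\to\ocaM_{0,n-1}$ --- in the shape
\[
P_n(t) \;=\; (1+t)\,P_{n-1}(t) \;+\; t\cdot\frac12\sum_{k=2}^{n-3}\binom{n-1}{k}\,P_{k+1}(t)\,P_{n-k}(t)\saf,
\]
which one checks against the list of classes above (it produces, e.g., $P_5=t^2+5t+1$ and $P_7=t^4+42t^3+127t^2+42t+1$). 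Every $P_m$ occurring on the right has $3\le m<n$ and is palindromic of degree $m-3$; the second summand is $t$ times a palindromic polynomial of degree $n-5$, which, read as a polynomial of degree $n-3$, is again palindromic --- in accordance with the palindromicity of $P_n$ forced by Poincar\'e duality, even though the visible factor of $t$ looks as if it should spoil it.

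The key point is that the operations appearing here all preserve $\gamma$-nonnegativity in a graded sense. Call a palindromic polynomial \emph{$\gamma$-positive of degree $d$} if it admits an expansion $\sum_i\gamma_i\,t^i(1+t)^{d-2i}$ with all $\gamma_i\ge0$ (see~\S\ref{sec:gapol}). Then: (i) if $A$ is $\gamma$-positive of degree $a$ and $B$ of degree $b$, then $AB$ is $\gamma$-positive of degree $a+b$, since $t^i(1+t)^{a-2i}\cdot t^j(1+t)^{b-2j}=t^{i+j}(1+t)^{(a+b)-2(i+j)}$; in particular (ii) $(1+t)A$ is $\gamma$-positive of degree $a+1$; (iii) $tA$ is $\gamma$-positive of degree $a+2$, because $t\cdot t^i(1+t)^{a-2i}=t^{i+1}(1+t)^{(a+2)-2(i+1)}$; and (iv) a nonnegative linear combination of polynomials $\gamma$-positive of a fixed degree is again $\gamma$-positive of that degree. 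Point (iii) is the one delicate observation, and it is what neutralizes the factor of $t$ in the recursion: that factor only shifts each $\gamma$-monomial $t^i(1+t)^j$ to $t^{i+1}(1+t)^j$, which still lies in the degree-$(n-3)$ $\gamma$-basis.

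Granting (i)--(iv), the induction closes immediately. The base case is $P_3=1=(1+t)^0$. For $n\ge4$, assume $P_m$ is $\gamma$-positive of degree $m-3$ for every $3\le m<n$. By (ii), $(1+t)P_{n-1}$ is $\gamma$-positive of degree $n-3$. By (i), each $P_{k+1}(t)P_{n-k}(t)$ with $2\le k\le n-3$ is $\gamma$-positive of degree $(k-2)+(n-k-3)=n-5$, so by (iv) the inner sum in the displayed recursion is $\gamma$-positive of degree $n-5$, and then by (iii) its product with $t$ is $\gamma$-positive of degree $n-3$. Hence $P_n$, being the sum of these two polynomials, is $\gamma$-positive of degree $n-3$ by (iv). The only real obstacle is thus expository rather than mathematical: identifying~\eqref{eq:recu} with the displayed recursion and verifying the degree arithmetic in (i)--(iv). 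This matches the remark in the introduction that $\gamma$-positivity ``follows directly from the recursion.''
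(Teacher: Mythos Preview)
Your proof is correct and follows essentially the same approach as the paper: induction on $n$ using the recursion~\eqref{eq:recu} together with the closure of $\gamma$-positivity under products, multiplication by $1+t$, multiplication by $t$ (shifting the center by~$1$), and nonnegative combinations at a fixed center --- exactly the properties the paper cites from \cite[Lemma~2.10]{FMSV}. Your symmetrized form of the recursion with $\tfrac12\binom{n-1}{k}$ is equivalent to~\eqref{eq:recu} by the substitution $k\mapsto n-1-k$ and Pascal's identity, so the only differences from the paper's proof are cosmetic.
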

\smallskip

The recursive formula determining $[\ocaM_{0,n}]$ is proved in~\cite{MR3701904} by an
argument using a suitable tree-level partition function. This method is modeled on the 
analogous result for the Poincar\'e polynomial of $\ocaM_{0,n}$ obtained by Y.~Manin
in~\cite{MR1363064}. The recursion is equivalent to a recursion for the betti numbers 
stated by S.~Keel in~\cite[p.~550]{MR1034665}, following from his complete determination
of the Chow groups of $\ocaM_{0,n}$. For the convenience of the reader, in~\S\ref{sec:GcM0n}
we reprove the recursion formula for the Grothendieck class of~$\ocaM_{0,n}$ directly 
from Keel's description of $\ocaM_{0,n}$ as a sequence of blow-ups over 
$\ocaM_{0,n-1}\times \ocaM_{0,4}$. In~\S\ref{sec:gapol} we prove Theorem~\ref{thm:gp}
as a direct consequence of the recursive formula~\eqref{eq:recu}.

Keel's recursion involves the whole set of betti numbers, while in order to prove 
Theorem~\ref{thm:main} it is necessary to have information about individual betti
numbers $a_{n,k}$ for fixed~$k$. In~\S\ref{sec:difeq} we explain how to obtain 
first order linear differential equations satisfied by the generating functions 
$\alpha_k(z):=\sum_{n\ge 3} a_{k,n} \frac{z^{n-1}}{(n-1)!}$, determining these 
functions along with the initial condition $\alpha_k(0)=0$. For instance, 
\[
\frac{d\alpha_2}{dz}= \alpha_2(z) +3 e^{3z} - \frac{2z^2+10z+10}2 e^{2z} + 
\frac{z^3+5z^2+8z+4}2 e^z\saf,
\]
from which
\begin{align*}
\alpha_2(z) &= 
\frac{3  e^{3z}}2 
- (z+1)(z + 2)e^{2z}  
+ \left( \frac{z^4}8 + \frac{5 z^3}6 + 2 z^2+2  z + \frac 12\right) e^z \\
&= 1\cdot \frac{z^4}{4!}+16 \cdot \frac{z^5}{5!}+ 127\cdot \frac{z^6}{6!}+
715\cdot \frac{z^7}{7!}+ 3292 \cdot \frac{z^8}{8!}+ 13333\cdot \frac{z^9}{9!}+
49556\cdot \frac{z^{10}}{10!}+\cdots
\end{align*}
recovering the coefficients of $\Lbb^2$ in the table displayed above.

We use an inductive argument to obtain a general description of these
generating functions as a combination of exponentials with (signed) polynomial 
coefficients $p_m^{(k)}(z)\in \Qbb[z]$
(Theorem~\ref{thm:main2sharp}). These polynomials certainly deserve further study;
we establish their degree and that their leading coefficient is positive, and conjecture
that they are ultra-log-concave.

The dominant terms in the expressions we obtain determine the asymptotic behavior 
of~$\alpha_{k,n}$.

\begin{theorem}\label{thm:main2}
For all $k\ge 0$,
\[
\alpha_{k,n} = \rk H^{2k}(\ocaM_{0,n}) \sim \frac{(k+1)^{k+n-1}}{(k+1)!}
\]
as $n\to \infty$.
\end{theorem}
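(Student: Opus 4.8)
The plan is to read off the asymptotics of $\alpha_{k,n}$ from the closed-form description of the generating function $\alpha_k(z)$ supplied by Theorem~\ref{thm:main2sharp}, which writes $\alpha_k(z)=\sum_{m=0}^{k+1}p_m^{(k)}(z)\,e^{mz}$ with $p_m^{(k)}(z)\in\Qbb[z]$ (using the convention $e^{0z}=1$, so that $p_0^{(k)}$ is a polynomial summand, possibly zero), and records the degrees and leading coefficients of the $p_m^{(k)}$. Three features of this description drive the argument: (i) the fastest-growing exponential occurring is $e^{(k+1)z}$; (ii) its coefficient $p_{k+1}^{(k)}(z)$ is a nonzero constant, call it $c_k$; and (iii) $c_k=\dfrac{(k+1)^k}{(k+1)!}$ --- the coefficient of $w^{k+1}$ in the tree function $T(w)=\sum_{m\ge1}\frac{m^{m-1}}{m!}\,w^m$ (so $T(w)=-W(-w)$ for the Lambert $W$ function), which is characterized by $T=w\,e^{T}$.

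Granting (i)--(iii), the conclusion is a routine bookkeeping of dominant terms. Since
\[
\left[\frac{z^{n-1}}{(n-1)!}\right]z^{\,j}e^{mz}=\frac{(n-1)!}{(n-1-j)!}\,m^{\,n-1-j},
\]
the contribution of a monomial $z^{\,j}e^{mz}$ to $\alpha_{k,n}$ equals $m^{\,n-1}$ times a polynomial in $n$ of degree $j$; hence each summand $p_m^{(k)}(z)\,e^{mz}$ with $m\le k$ contributes only $O\!\bigl(k^{\,n-1}\,\mathrm{poly}(n)\bigr)=o\!\bigl((k+1)^{n-1}\bigr)$, while the polynomial summand $p_0^{(k)}$ contributes $0$ as soon as $n$ exceeds $\deg p_0^{(k)}+1$. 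Therefore $\alpha_{k,n}$ is asymptotically equal to the contribution of $p_{k+1}^{(k)}(z)\,e^{(k+1)z}$, which, $p_{k+1}^{(k)}$ being the constant $c_k$, equals exactly $c_k\,(k+1)^{n-1}$. Hence
\[
\alpha_{k,n}\ \sim\ c_k\,(k+1)^{n-1}\ =\ \frac{(k+1)^k}{(k+1)!}\,(k+1)^{n-1}\ =\ \frac{(k+1)^{k+n-1}}{(k+1)!}\,,
\]
which is the assertion.

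The substance of the proof --- and the step I expect to be the main obstacle --- is thus (i)--(iii), i.e.\ the portion of Theorem~\ref{thm:main2sharp} controlling the dominant exponential. Constancy of $p_{k+1}^{(k)}$ (feature (ii)) is forced by the shape of the first-order linear ODE of~\S\ref{sec:difeq}: writing it as $\alpha_k'=\alpha_k+\sum_{m=0}^{k+1}q_m^{(k)}(z)\,e^{mz}$ and integrating against the factor $e^{-z}$, the $e^{mz}$-component of the solution inherits the polynomial degree of $q_m^{(k)}$ for every $m\ne1$ (only the resonant term $m=1$ picks up one extra degree), so it suffices that the forcing term's $e^{(k+1)z}$-part $q_{k+1}^{(k)}$ be constant. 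That follows by induction on $k$: the recursion~\eqref{eq:recu} assembles $q_{k+1}^{(k)}$ only out of the top parts $c_j\,e^{(j+1)z}$ ($0\le j\le k-1$) of the lower generating functions $\alpha_0,\dots,\alpha_{k-1}$, which are constants by the inductive hypothesis, whence $q_{k+1}^{(k)}$ is constant and $p_{k+1}^{(k)}=q_{k+1}^{(k)}/k=:c_k$ as well. To evaluate $c_k$ (feature (iii)), I would extract from~\eqref{eq:recu} the induced recursion for the sequence $(c_k)_{k\ge0}$ and check that it agrees with the recursion for the Taylor coefficients of $T$; equivalently, one verifies that $\sum_{k\ge0}c_k\,t^k\,e^{(k+1)z}=\tfrac1t\,T(t\,e^{z})$ is precisely the dominant part of the full generating series $\sum_{n\ge3}[\ocaM_{0,n}]\,\frac{z^{n-1}}{(n-1)!}$, giving $c_k=[w^{k+1}]\,T(w)=\frac{(k+1)^k}{(k+1)!}$. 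This identification is the crux; once it and (i)--(ii) are in hand, Theorem~\ref{thm:main2} follows from the elementary estimate above.
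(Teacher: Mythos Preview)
Your proposal is correct and follows essentially the same route as the paper: both deduce the asymptotic from Theorem~\ref{thm:main2sharp} by reading off the coefficient of $z^{n-1}/(n-1)!$ and observing that every summand $p(z)\,e^{mz}$ with $m\le k$ contributes $O\bigl(m^{n}\,\mathrm{poly}(n)\bigr)=o\bigl((k+1)^{n-1}\bigr)$, leaving the constant-times-$e^{(k+1)z}$ term as the dominant one (the paper packages this extraction as Theorem~\ref{thm:main2again}). Your sketch of why the leading coefficient equals $(k+1)^k/(k+1)!$ via the tree function $T(w)=-W(-w)$ also parallels the paper's Lemma~\ref{lem:Lamb}; note, incidentally, that for $k\ge 1$ the polynomial summand you call $p_0^{(k)}$ is in fact identically zero by Theorem~\ref{thm:main2sharp}, so that case only arises for $k=0$.
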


We remark here that Theorem~\ref{thm:main2} is equivalent to the statement that, 
as $n\to \infty$,
\[
-\sum_{k=0}^{n-3} \frac{\rk H^{2k}(\ocaM_{0,n})}{(k+1)^{n-1}}\, (-t)^{k+1} \sim W(t)
\]
(in the sense that for every $k\ge 0$, the coefficient of $t^k$ in the l.h.s.~converges to
the corresponding coefficient in the r.h.s.) where $W(t)$ is the {\em principal branch of 
the Lambert~W function,\/} characterized by the identity $W(t) e^{W(t)}=t$.\smallskip

Theorem~\ref{thm:main} follows easily from Theorem~\ref{thm:main2}, see~\S\ref{sec:proof}.
In fact, in~\S\ref{sec:proof} we will obtain a more precise result than 
Theorem~\ref{thm:main2}. We will show that there exist polynomials
$q_m^{(k)}(n)\in \Qbb[n]$ of degree $2m$, with positive leading coefficient, such that
\[
\rk H^{2k}(\ocaM_{0,n})=\frac{(k+1)^{k-1}}{(k+1)!}\cdot (k+1)^n
+\sum_{m=1}^k (-1)^m q^{(k)}_m(n)\cdot (k+1-m)^n
\]
(Theorem~\ref{thm:main2again}, Remark~\ref{rem:clo}).
The polynomials $q_m^{(k)}$ have straightforward expressions in terms of the coefficients
of the polynomials $p_m^{(k)}$ mentioned above, and are also object of evident interest.

\medskip

{\em Acknowledgments.} 
The authors are grateful to J.~Huh for pointing out reference~\cite{FMSV} and to Luis
Ferroni, Matt Larson, and Sam Payne for helpful comments.
P.A.~was supported in part by the Simons Foundation, collaboration grant~\#625561,
and by an FSU `COFRS' award. He thanks Caltech for hospitality. S.C.~was supported
by a Summer Undergraduate Research Fellowship at Caltech. M.M.~was
supported by NSF grant DMS-2104330. 


\section{Recursion for the Grothendieck class of $\ocaM_{0,n}$}\label{sec:GcM0n}

The class $[\ocaM_{0,n}]$ in the Grothendieck group $K(\Var_k)$, $k$ any algebraically
closed field, is determined by the following recursion.

\begin{theorem}\label{thm:MM}
$[\ocaM_{0,3}]=1$. For $n>3$, 
\begin{equation}\label{eq:recu}
[\ocaM_{0,n}] = [\ocaM_{0,n-1}] (1+\Lbb)
+\Lbb\, \sum_{i=3}^{n-2} \binom {n-2}{i-1} [\ocaM_{0,i}] 
[\ocaM_{0,n+1-i}]\saf.
\end{equation}
\end{theorem}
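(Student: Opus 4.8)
The plan is to read the recursion off Keel's inductive presentation of $\ocaM_{0,n}$ as an iterated blow-up of $\ocaM_{0,n-1}\times\ocaM_{0,4}$, using only the behaviour of the class in $K(\Var_k)$ under blow-ups along smooth centers. Recall that if $Z\hookrightarrow Y$ is a closed immersion of smooth $k$-varieties with $Z$ of pure codimension $c$, then the exceptional divisor of $\beta\colon\mathrm{Bl}_Z Y\to Y$ is the projectivized normal bundle $\Pbb(N_{Z/Y})\to Z$, a Zariski-locally trivial $\Pbb^{c-1}$-bundle, while $\beta$ restricts to an isomorphism over $Y\setminus Z$; the scissor relations therefore give
\[
[\mathrm{Bl}_Z Y]=[Y]-[Z]+[\Pbb^{c-1}]\,[Z]=[Y]+\bigl(\Lbb+\Lbb^2+\cdots+\Lbb^{c-1}\bigr)[Z]\saf.
\]
In Keel's tower every center turns out to have codimension $2$ (its dimension being $n-5$, matching $\dim\ocaM_{0,n}=n-3$), so each blow-up in the tower contributes a single term $\Lbb\,[Z]$ to the class.

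First I would dispose of the base cases: $\ocaM_{0,3}$ is a point, so $[\ocaM_{0,3}]=1$, and $\ocaM_{0,4}\cong\Pbb^1$ has class $1+\Lbb$, which is \eqref{eq:recu} for $n=4$ (the sum there being empty). For $n\ge 4$, Keel (\cite{MR1034665}) realizes $\ocaM_{0,n}$ as the end of a chain
\[
\ocaM_{0,n}=Y_\ell\xrightarrow{\ \beta_\ell\ }Y_{\ell-1}\xrightarrow{\ \beta_{\ell-1}\ }\cdots\xrightarrow{\ \beta_1\ }Y_0=\ocaM_{0,n-1}\times\ocaM_{0,4}\saf,
\]
in which each $\beta_j$ is the blow-up of $Y_{j-1}$ along a smooth codimension-$2$ subvariety which, at the stage where it is blown up, is isomorphic to a product of boundary strata $\ocaM_{0,i}\times\ocaM_{0,n+1-i}$. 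Fixing one of the marked points carried by the $\ocaM_{0,n-1}$-factor, the centers are indexed by the subsets $S$ of the remaining $n-2$ marked points with $2\le|S|\le n-3$: a subset of size $i-1$ produces a center of class $[\ocaM_{0,i}]\,[\ocaM_{0,n+1-i}]$ (with $3\le i\le n-2$, the degenerate sizes $0,1$ and $n-2$ being excluded), so there are exactly $\binom{n-2}{i-1}$ centers of each type. The asymmetry between $i$ and $n+1-i$ is produced precisely by the choice of the distinguished marked point.

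Granting this, \eqref{eq:recu} is a telescoping computation. From $[Y_0]=[\ocaM_{0,n-1}]\cdot[\Pbb^1]=(1+\Lbb)[\ocaM_{0,n-1}]$, and applying the codimension-$2$ blow-up formula once per stage while using that each center $Z_j$ is isomorphic to the stated product and hence has class $[Z_j]=[\ocaM_{0,i}][\ocaM_{0,n+1-i}]$ in $K(\Var_k)$,
\[
[\ocaM_{0,n}]=[Y_0]+\Lbb\sum_{j=1}^{\ell}[Z_j]
=(1+\Lbb)[\ocaM_{0,n-1}]+\Lbb\sum_{i=3}^{n-2}\binom{n-2}{i-1}[\ocaM_{0,i}]\,[\ocaM_{0,n+1-i}]\saf.
\]
As a sanity check, for $n=5$ this says $\ocaM_{0,5}$ is $\ocaM_{0,4}\times\ocaM_{0,4}\cong\Pbb^1\times\Pbb^1$ blown up at $\binom{3}{2}=3$ points, giving $(1+\Lbb)^2+3\Lbb=\Lbb^2+5\Lbb+1$, matching the table. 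The one substantive step is the middle paragraph: one must appeal to Keel's construction to know that the centers genuinely occur in codimension $2$; that, at the moment each is blown up, it is isomorphic — not merely birational — to the asserted product, so that its $K(\Var_k)$-class is exactly $[\ocaM_{0,i}][\ocaM_{0,n+1-i}]$ independently of the earlier blow-ups; and that the bookkeeping of which boundary strata become exceptional along the tower produces precisely the binomial coefficients $\binom{n-2}{i-1}$. This is where the ordering in Keel's construction matters — the centers must be blown up in an order for which each has been separated from the others by the time its turn comes — and it is the only place where anything beyond the soft blow-up formalism above enters.
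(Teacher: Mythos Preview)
Your overall strategy matches the paper's: invoke Keel's iterated blow-up description of $\ocaM_{0,n}$, apply the codimension-$2$ blow-up formula in $K(\Var_k)$, and sum the contributions of the centers. The blow-up formula and the telescoping computation are fine.

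The gap is exactly in the paragraph you flag as ``the one substantive step.'' Your description of the centers is not Keel's. In Keel's construction the centers are indexed by subsets $T\subseteq\{1,\dots,n-1\}$ such that $T^c$ contains at least two of $\{1,2,3\}$, with the associated center isomorphic to $\ocaM_{0,|T|+1}\times\ocaM_{0,|T^c|+1}$. This does \emph{not} yield $\binom{n-2}{i-1}$ centers with class $[\ocaM_{0,i}][\ocaM_{0,n+1-i}]$; for $n=6$, Keel blows up $7$ copies of $\ocaM_{0,3}\times\ocaM_{0,4}$ and $3$ copies of $\ocaM_{0,4}\times\ocaM_{0,3}$, whereas your indexing predicts $6$ and $4$. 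So ``fixing one marked point and taking subsets of the remaining $n-2$'' does not describe Keel's tower, and the asserted binomial count per isomorphism type is false as stated.

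What rescues the recursion --- and what the paper actually does --- is to count Keel's centers with his genuine indexing, and then use the symmetry $[\ocaM_{0,i}\times\ocaM_{0,n+1-i}]=[\ocaM_{0,n+1-i}\times\ocaM_{0,i}]$ in $K(\Var_k)$ to show that the \emph{sum of classes} agrees with $\sum_{i=3}^{n-2}\binom{n-2}{i-1}[\ocaM_{0,i}][\ocaM_{0,n+1-i}]$ even though the term-by-term distributions differ. Concretely, the paper verifies that for $3\le i<\tfrac{n+1}{2}$ the number of Keel centers isomorphic to $\ocaM_{0,i}\times\ocaM_{0,n+1-i}$ (in either order) is $\binom{n-1}{i-1}=\binom{n-2}{i-1}+\binom{n-2}{n-i}$, with a separate check for the middle term when $n$ is odd. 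That combinatorial identity is the missing content in your argument; once you supply it, the rest of your proof goes through and coincides with the paper's.
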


This formula is equivalent to the statement given in~\cite{MR3701904}, proved there
by the same method used to prove an analogous statement for the Poincar\'e polynomial
in~\cite{MR1363064}, that is, by adding contributions of strata of $\ocaM_{0,n}$.
Ultimately, the recursion follows from
\[
[\caM_{0,k}] 
= (\Lbb-2)\cdots (\Lbb-k+2)\saf,
\] 
which is easily proved directly, and a sum over trees performed by using (to 
quote~\cite{MR1363064}) `a general formula of perturbation theory in order to reduce 
the calculation of the relevant generating functions to the problem of finding the critical 
value of an appropriate formal potential.'

The recursion is equivalent to a recursive formula determining the set of betti numbers
of~$\ocaM_{0,n}$, given\footnote{We alert the reader to two typos in the cited formula
in~\cite{MR1034665}:
the binomial $\binom nk$ should be $\binom nj$, and the expression $n-j-1$ should 
be $n-j+1$.} in~\cite[p.~550]{MR1034665}. In this reference, the formulas for the betti numbers
are presented as a consequence of the determination of the Chow groups of $\ocaM_{0,n}$,
\cite[Theorem~1,~\S3]{MR1034665}. The literature on such formulas is very rich. We mention
that the recursion is equivalent to a functional equation obtained by Getzler as a consequence 
of~\cite[Theorem~5.9]{MR1363058} and presented as a reformulation of a computation 
of Fulton and MacPherson from~\cite{MR1259368}. An alternative version of the same 
functional equation is given by Manin in~\cite[(0.7)]{MR1363064}. Chen-Gibney-Krashen
extend these formulas to the case of pointed projective spaces and to the motivic setting, 
\cite{MR2496455}; Li obtains general motivic formulas for configuration spaces 
in~\cite{MR2595554}.

For the convenience of the reader, we offer a direct derivation of the recursion in the
Grothendieck group $K(\Var_k)$ from Keel's description of $\ocaM_{0,n}$.

\begin{proof}[Proof of Theorem~\ref{thm:MM}]
We recall Keel's recursive construction of $\ocaM_{0,n}$ from~\cite{MR1034665}.
The space $\ocaM_{0,3}$ is a point, and $\ocaM_{0,4}\cong\Pbb^1$. For $n> 4$, 
$\ocaM_{0,n}$ is constructed as a 
sequence of blow-ups over $\ocaM_{0,n-1}\times \ocaM_{0,4}$.  The centers of the 
blow-ups are all disjoint, smooth, of codimension~$2$. In fact, they are isomorphic to 
products
\[
\ocaM_{0,|T|+1} \times \ocaM_{0,|T^c|+1} \saf,
\]
where $T$ denotes a subset of $\{1,\dots, n-1\}$ such that the complement 
$T^c$ contains two of $1,2,3$. Note that each center is isomorphic to
$\ocaM_{0,i}\times \ocaM_{0,n+1-i}$, with $i=3,\dots, n-2$.

Now, if $\Til V$ is the blow-up of a variety $V$ along a regularly embedded 
center $B$ of codimension~$r$, then the Grothendieck class of $\Til V$ is
\[
[\Til V]=[V] + (\Lbb+\cdots +\Lbb^{r-1}) [B]\saf.
\]
Indeed, the exceptional divisor of the blow-up is isomorphic to the projectivization
of $N_BV$, a $\Pbb^{r-1}$-bundle over $B$. In the case we are considering, 
we are blowing up $\ocaM_{0,n-1}\times \ocaM_{0,4}$, with class
\[
[\ocaM_{0,n-1}\times \ocaM_{0,4}]
=[\ocaM_{0,n-1}\times \Pbb^1] = [\ocaM_{0,n-1}] (1+\Lbb)
\]
and each center has codimension~$r=2$; therefore
\[
[\ocaM_{0,n}]=[\ocaM_{0,n-1}] (1+\Lbb) +
\Lbb\sum_k [B_k]\saf,
\]
where the sum runs through the centers $B_k$ of the blow-ups. Thus, in order to
prove~\eqref{eq:recu}, it suffices to show that 
\[
\sum_k [B_k] = \sum_{i=3}^{n-2} \binom {n-2}{i-1} [\ocaM_{0,i}\times \ocaM_{0,n+1-i}]\saf.
\]
Since $\ocaM_{0,i}\times \ocaM_{0,n+1-i}\cong \ocaM_{0,n+1-i}\times \ocaM_{0,i}$, the
right-hand side equals
\begin{multline}\label{eq:coll}
\binom{n-2}{\frac{n-1}2} [\ocaM_{0,\frac{n+1}2}\times \ocaM_{0,\frac{n+1}2}]+
\sum_{3\le i< \frac{n+1}2} \left( \binom{n-2}{i-1}+\binom{n-2}{n-i}\right)
[\ocaM_{0,i}\times \ocaM_{0,n+1-i}] \\
=\binom{n-2}{\frac{n-1}2} [\ocaM_{0,\frac{n+1}2}\times \ocaM_{0,\frac{n+1}2}]+
\sum_{3\le i< \frac{n+1}2} \binom{n-1}{i-1} [\ocaM_{0,i}\times \ocaM_{0,n+1-i}]
\end{multline}
where the first summand only appears if $n$ is odd.

According to Keel's construction, for $n\ge 4$, a center isomorphic to $\ocaM_{0,i}
\times \ocaM_{0,n+1-i}$ is blown up for all sets $T\subseteq \{1,\dots, n-1\}$
such that
\begin{itemize}
\item 
$T^c$ contains at least two of $1,2,3$;
\item
$|T^c|=n-i$ or $|T^c|=i-1$.
\end{itemize}
For all $k$ between $2$ and $n-3$, the number of subsets $T$ such that $|T^c|=k$ 
and $T^c$ contains exactly two of $1,2,3$ is
\[
3\binom{n-4}{k-2}
\]
while the number of subsets $T$ such that $|T^c|=k$ and $T^c$ contains all of $1,2,3$ is
\[
\binom{n-4}{k-3}
\]
(in particular, $0$ if $k=2$). For $3\le i<\frac {n+1}2$, the number of centers
isomorphic to the product $\ocaM_{0,i} \times \ocaM_{0,n+1-i}$ is therefore
\[
3\binom{n-4}{n-i-2}+\binom{n-4}{n-i-3}+3\binom{n-4}{i-3}+\binom{n-4}{i-4} =\binom{n-1}{i-1}\saf.
\]
(Maybe more intrinsically, there is one such center for every subset $S\subseteq\{1,\dots, n-1\}$ 
of size $i-1$. Indeed, if $S$ is such a subset, then either $S$ or $S^c$ satisfies the
condition posed on~$T^c$ in Keel's prescription.) If $n$ is odd and $i=\frac {n+1}2$, 
the number of centers isomorphic to $\ocaM_{0,\frac{n+1}2} \times \ocaM_{0,\frac{n+1}2}$ is
\[
3\binom{n-4}{\frac {n+1}2-3}+\binom{n-4}{\frac {n+1}2-4}=\binom{n-2}{\frac{n-1}2}\saf.
\]
This is as prescribed in~\eqref{eq:coll}, concluding the verification.
\end{proof}

\begin{remark}
The distributions
of products in the sequence of centers and in the corresponding sum in~\eqref{eq:recu}
differ in general. For example, for $n=6$ the summation in~\eqref{eq:recu} expands to
\[
6\, [\ocaM_{0,3}\times \ocaM_{0,4}]
+ 4\, [\ocaM_{0,4}\times \ocaM_{0,3}]
\]
while Keel's construction prescribes blowing up along $7$ copies of 
$\ocaM_{0,3}\times \ocaM_{0,4}$ and $3$ copies of $\ocaM_{0,4}\times \ocaM_{0,3}$.

The recursion for the Poincar\'e polynomial following directly from Keel's recursion
in~\cite[p.~550]{MR1034665} gives yet a different decomposition: 
$5\, [\ocaM_{0,3}\times \ocaM_{0,4}]+ 5\, [\ocaM_{0,4}\times \ocaM_{0,3}]$.
\qede\end{remark}


\section{$\ocaM_{0,n}$ is $\gamma$-positive}\label{sec:gapol}

For a survey on $\gamma$-positivity, we refer the reader to~\cite{MR3878174};
we follow the terminology in~\cite[\S2.2]{FMSV}.
A univariate polynomial $f(t)=\sum a_i t^i$ is `symmetric', with `center' $\frac d2$, if $a_{d-i}
=a_i$ for all $i$. Every symmetric polynomial $f(t)\in \Zbb[t]$ with center $\frac d2$
can clearly be written
\begin{equation}\label{eq:sympo}
f(t) = \sum_{i=0}^{\lfloor \frac d2 \rfloor} \gamma_i \, t^i\, (t+1)^{d-2i}
\end{equation}
for unique integers $\gamma_i$, $i=0,\dots, \lfloor \frac d2 \rfloor$.

\begin{defin}\label{def:gp}
We say that a symmetric polynomial $f$ is {\em $\gamma$-positive\/} if all the integers 
$\gamma_i$ are nonnegative.
\qede\end{defin}

Our interest in this notion is due to the following well-known fact.

\begin{lemma}[\cite{MR3878174}, \S1; \cite{MR4681276}, Proposition~5.3]
Real-rooted symmetric polynomials with nonnegative coefficients are $\gamma$-positive.
\end{lemma}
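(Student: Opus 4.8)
The plan is to prove the statement by induction on the number of real roots, peeling them off in symmetric pairs. Let $f(t)=\sum_{i=0}^d a_i t^i$ be symmetric with center $d/2$, with $a_i\ge 0$, and suppose $f$ has only real roots. First I would dispose of the degenerate possibilities: if $f\equiv 0$ the claim is trivial, and if $0$ is a root then, since $f$ is symmetric with nonnegative coefficients, $t^d f(1/t)=f(t)$ forces the top coefficient $a_d=a_0=0$, contradicting that $\deg f=d$ — so either $f$ is a nonzero constant (trivially $\gamma$-positive, $\gamma_0=a_0\ge 0$) or $f(0)\ne 0$. Hence all real roots of $f$ are nonzero. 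Because all coefficients $a_i\ge 0$, a nonzero real root must in fact be negative: a positive $t$ gives $f(t)>0$. So we may write $f(t)=a_d\prod_{j}(t+r_j)$ with $r_j>0$.

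The key structural step exploits symmetry: the multiset of roots $\{-r_j\}$ is stable under $x\mapsto 1/x$, since $f(t)=t^d f(1/t)$ (up to the nonzero constant $a_d=a_0$). Thus the roots come in pairs $-r,\,-1/r$ with $r>0$, together with possibly one self-paired root at $-1$ (occurring when $d$ is odd, or more generally whenever an odd number of $r_j$ equal $1$). For a pair $\{-r,-1/r\}$ with $r\ne 1$, the corresponding quadratic factor, suitably normalized, is
\[
(t+r)(t+1/r) \;=\; t^2 + (r+1/r)\,t + 1 \;=\; (t+1)^2 + (r+1/r-2)\,t,
\]
and $r+1/r-2=(\sqrt r-1/\sqrt r)^2>0$. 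So each such quadratic factor is of the form $(t+1)^2+c\,t$ with $c>0$; the leftover factor from a root at $-1$ is simply $(t+1)$. Therefore $f(t)=a_d\,(t+1)^{\varepsilon}\prod_{\text{pairs}}\big((t+1)^2+c_\ell\,t\big)$ with $a_d=a_0>0$, $\varepsilon\in\{0,1\}$, and all $c_\ell>0$.

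It now suffices to show that a product of this shape is $\gamma$-positive. Here I would again induct, on the number of quadratic factors. The base cases are $a_d(t+1)^{\varepsilon}$, which expand to $\gamma$-positive expressions (all $\gamma_i$ equal $0$ or $a_d$). For the inductive step, if $g(t)=\sum_i \gamma_i\,t^i(t+1)^{d-2i}$ is symmetric of center $d/2$ with all $\gamma_i\ge 0$, then
\[
g(t)\cdot\big((t+1)^2+c\,t\big)
=\sum_i \gamma_i\,t^i(t+1)^{d+2-2i}
+c\sum_i \gamma_i\,t^{i+1}(t+1)^{d-2i},
\]
and the right-hand side is already written in the $\gamma$-basis for a symmetric polynomial of center $(d+2)/2$, with all coefficients nonnegative since $\gamma_i\ge 0$ and $c>0$. (One should note that multiplication by $(t+1)^2+ct$ indeed preserves symmetry and shifts the center by $1$, which is immediate from $(t+1)^2+ct$ being itself symmetric of center $1$.) This completes the induction, hence the proof.

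The only genuinely delicate point is the reduction in the second paragraph — that real-rootedness plus nonnegative coefficients plus symmetry forces the factorization into $(t+1)$ and $(t+1)^2+c t$ with $c>0$ — and in particular checking that the self-reciprocal pairing of roots is compatible with their multiplicities so that the product decomposition is valid; everything after that is a routine nonnegativity bookkeeping in the $\gamma$-basis. Alternatively, this Lemma is standard and one may simply cite \cite{MR3878174} and \cite{MR4681276} as in the statement; the above is the self-contained argument.
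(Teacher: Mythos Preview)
The paper does not actually prove this lemma; it is stated with attribution to \cite{MR3878174} and \cite{MR4681276} and used as a black box. Your self-contained argument is the standard one (pair the reciprocal roots and write each quadratic factor as $(t+1)^2+ct$, then induct in the $\gamma$-basis), and it is correct in substance.

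One small imprecision: when $-1$ is a root of multiplicity at least~$2$, pairing two copies of $-1$ produces the quadratic factor $(t+1)^2$, i.e.\ the case $c=0$; so your factorization should allow $c_\ell\ge 0$, not $c_\ell>0$, and the leftover exponent $\varepsilon$ is the parity of the multiplicity of $-1$. This does not affect the proof, since your inductive step
\[
g(t)\big((t+1)^2+c\,t\big)=\sum_i \gamma_i\,t^i(t+1)^{d+2-2i}+c\sum_i \gamma_i\,t^{i+1}(t+1)^{d+2-2(i+1)}
\]
only needs $c\ge 0$ to preserve nonnegativity of the $\gamma$-coefficients.
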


Thus, $\gamma$-positivity may be taken as collateral evidence for real-rootedness.
In~\cite[Theorem~1.8]{FMSV} it is shown that the Hilbert series of the Chow ring of every 
matroid is $\gamma$-positive. We prove the analogous statement for $\ocaM_{0,n}$.

\begin{theorem}\label{thm:gpag}
For all $n\ge 3$, the polynomial $P_n(t)\in \Zbb[t]$ such that $[\ocaM_{0,n}]=P_n(\Lbb)$
is~$\gamma$-positive.
\end{theorem}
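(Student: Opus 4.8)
The plan is to prove $\gamma$-positivity by induction on $n$ directly from the recursion~\eqref{eq:recu}, mimicking the strategy used in~\cite[Theorem~1.8]{FMSV} for Chow rings of matroids. The key observation is that the set of $\gamma$-positive symmetric polynomials (of various centers) behaves well under the operations appearing in~\eqref{eq:recu}. Specifically: (i) if $f$ is symmetric with center $\frac d2$ and $\gamma$-positive, then $(1+t)f$ is symmetric with center $\frac{d+1}2$ and $\gamma$-positive, with the same $\gamma$-vector; (ii) if $f$ and $g$ are $\gamma$-positive with centers $\frac d2$ and $\frac e2$, then the product $fg$ is $\gamma$-positive with center $\frac{d+e}2$, since $t^i(1+t)^{d-2i}\cdot t^j(1+t)^{e-2j}=t^{i+j}(1+t)^{(d+e)-2(i+j)}$ and the $\gamma$-coefficients of the product are nonnegative combinations (convolutions) of those of the factors; (iii) multiplication by $t$ shifts a $\gamma$-positive polynomial with center $\frac d2$ to one with center $\frac{d+1}2$ that is again $\gamma$-positive, because $t\cdot t^i(1+t)^{d-2i} = t^{i+1}(1+t)^{(d+1)-2(i+1)}\cdot(1+t) \cdot (1+t)^{-1}$—more carefully, one checks $t\cdot t^i(1+t)^{d-2i}$ expands into the $\gamma$-basis for center $\frac{d+1}2$ with nonnegative coefficients by writing $t = \frac{1}{2}\big((1+t) + (t - 1)\big)$... actually the clean statement is that $t\cdot(1+t)^{2j}$ and more generally $t^{i+1}(1+t)^{d-2i}$ is $\gamma$-positive of center $\frac{d+1}{2}$, which follows from the identity $t(1+t)^{2j} = \sum_{r} c_r\, t^{r+1}(1+t)^{2j-2r}$ with $c_r \ge 0$; this is the standard fact that $t^i(1+t)^{d-2i}$ is itself $\gamma$-positive of center $\frac{d+1}{2}$ after multiplying by $t$, which is elementary.

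Granting these closure properties, the induction is immediate. For the base case, $P_3(t) = 1$, $P_4(t) = 1+t$ are visibly $\gamma$-positive (centers $0$ and $\frac12$). For the inductive step with $n > 4$: by~\eqref{eq:recu}, $P_n(t) = (1+t)P_{n-1}(t) + t\sum_{i=3}^{n-2}\binom{n-2}{i-1}P_i(t)P_{n+1-i}(t)$. By the inductive hypothesis each $P_i$ ($3 \le i \le n-1$) is $\gamma$-positive; $P_i$ has center $\frac{i-3}{2}$ (since $\deg P_i = i-3$), so $P_iP_{n+1-i}$ has center $\frac{(i-3)+(n-2-i)}{2} = \frac{n-5}{2}$ and is $\gamma$-positive by (ii), hence $t\cdot P_iP_{n+1-i}$ is $\gamma$-positive of center $\frac{n-4}{2}$ by (iii); meanwhile $(1+t)P_{n-1}$ is $\gamma$-positive of center $\frac{n-4}{2}$ by (i). All terms have the same center $\frac{n-4}{2} = \frac{\deg P_n}{2}$, the binomial coefficients are nonnegative, and a sum of $\gamma$-positive polynomials with a common center is $\gamma$-positive; therefore $P_n$ is $\gamma$-positive.

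The only real content is verifying the three closure properties, and among these the one requiring genuine (though elementary) attention is (iii), the behavior of $\gamma$-positivity under multiplication by $t$—equivalently, showing that $t\cdot t^i(1+t)^{2j}$ lies in the nonnegative span of $\{t^r(1+t)^{2j+1-2r}\}$. I expect this to be the main—and essentially only—obstacle, and it is resolved by the explicit expansion $t(1+t)^{2j} = \sum_{k=0}^{j}\left(\binom{2j}{k} - \binom{2j}{k-1}\right)t^{k+1}(1+t)^{2j-2k}$, whose coefficients $\binom{2j}{k}-\binom{2j}{k-1}\ge 0$ for $k \le j$ are the well-known nonnegative ballot-type numbers (this is exactly the computation already implicit in~\cite{FMSV}). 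Once this identity is in hand, combined with the trivial multiplicativity of the $\gamma$-basis under products and the invariance under multiplication by $(1+t)$, the proof is a one-line induction, exactly parallel to the matroid case, and one simply needs to observe that the degree/center bookkeeping works out because $\deg[\ocaM_{0,n}] = n-3$ and the recursion combines a product over a $\binom{n-2}{i-1}$-fold family of pieces whose degrees add up correctly after multiplication by the single factor of $\Lbb$.
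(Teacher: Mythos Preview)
Your overall strategy---strong induction on $n$ via the recursion~\eqref{eq:recu}, using closure of $\gamma$-positivity under multiplication by $(1+t)$, by $t$, under products, and under sums with a common center---is exactly the paper's proof. But your closure property (iii) is misstated, and this sends you on an unnecessary detour. If $f$ is symmetric with center $\tfrac d2$, then $tf$ is symmetric with center $\tfrac{d+2}2$, not $\tfrac{d+1}2$: if $f=\sum_i \gamma_i\, t^i(1+t)^{d-2i}$ then $tf=\sum_i \gamma_i\, t^{i+1}(1+t)^{(d+2)-2(i+1)}$, so $\gamma(tf)=t\,\gamma(f)$ trivially and no ballot-number identity is needed. (In fact $t(1+t)^{2j}$ is \emph{not} symmetric with center $\tfrac{2j+1}{2}$, so the expansion you write down cannot hold; and a direct check at $j=1$ shows it fails.)

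This miscount of the center propagates: $t\,P_iP_{n+1-i}$ has center $\tfrac{n-5}{2}+1=\tfrac{n-3}{2}$, and $(1+t)P_{n-1}$ has center $\tfrac{n-4}{2}+\tfrac12=\tfrac{n-3}{2}$; both equal $\tfrac{\deg P_n}{2}=\tfrac{n-3}{2}$, not $\tfrac{n-4}{2}$. With this corrected, the induction goes through in one line, and you recover precisely the paper's recursion
\[
\gamma(P_n)=\gamma(P_{n-1})+t\sum_{i=3}^{n-2}\binom{n-2}{i-1}\gamma(P_i)\,\gamma(P_{n+1-i}),
\]
whose right-hand side is manifestly nonnegative by the inductive hypothesis.
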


(This is Theorem~\ref{thm:gp}, stated in the introduction.)

\begin{proof}
Following~\cite{FMSV}, for a symmetric polynomial~\eqref{eq:sympo} we let
\[
\gamma(f):= \sum_{i=0}^{\lfloor \frac d2 \rfloor} \gamma_i \, t^i\saf.
\]
Thus, $f$ is $\gamma$-positive if and only if $\gamma(f)$ has nonnegative coefficients.
This operation satisfies several properties (see~\cite[Lemma~2.10]{FMSV}):
\begin{itemize}
\item[(i)] $\gamma(fg)=\gamma(f)\gamma(g)$;
\item[(ii)] $\gamma(tf)=t\gamma(f)$
\item[(iii)] $\gamma(f(1+t))=\gamma(f)$
\item[(iv)] If $f$ and $g$ have the same center of symmetry, then $\gamma(f+g)=\gamma(f)
+\gamma(g)$.
\end{itemize}

With this understood, the proof of Theorem~\ref{thm:gpag} is
a straightforward consequence of the recursion~\eqref{eq:recu} for the Grothendieck
class $[\ocaM_{0,n}]$. In terms of the polynomial $P_n$, this recursion reads
\[
P_n(t) = P_{n-1}(t)  (1+t)
+t \sum_{i=3}^{n-2} \binom {n-2}{i-1} P_i(t) P_{n+1-i}(t)\saf.
\]
The constant $P_3(t)=1$ is trivially $\gamma$-positive. Arguing by strong induction,
assume that $P_k(t)$ is $\gamma$-positive for all $k<n$.
The degree of $P_k(t)$ is $k-3$ and the polynomial is palindromic, so it is symmetric
with center $\frac {k-3}2$. It follows that each term $P_i P_{n+1-i}$ is symmetric with
center $\frac{n-5}2$, and $\gamma(P_i P_{n+1-i})=\gamma(P_i)\gamma(P_{n+1-i})$ 
by~(i). By (ii) and (iv),
\[
\gamma\left(t\sum_{i=3}^{n-2} \binom {n-2}{i-1} P_i(t) P_{n+1-i}(t)\right)
=t\sum_{i=3}^{n-2} \binom {n-2}{i-1} \gamma(P_i)\gamma(P_{n+1-i})\saf,
\]
and this polynomial has center $\frac{n-5}2+1=\frac{n-3}2$.
By~(iii),
\[
\gamma(P_{n-1}(t) (1+t))=\gamma(P_{n-1})\saf,
\]
and $P_{n-1}(t) (1+t)$ also has center $\frac{n-3}2$. By (iv) again, we can conclude
\begin{equation}\label{eq:recuga}
\gamma(P_n) = \gamma(P_{n-1})+t\sum_{i=3}^{n-2} \binom {n-2}{i-1} 
\gamma(P_i)\gamma(P_{n+1-i})\saf.
\end{equation}
By induction the r.h.s.~has nonnegative coefficients, and it follows that $P_n$ is 
$\gamma$-positive, as needed.
\end{proof}

\begin{remark}
The argument is analogous to the proof of~\cite[Theorem~1.8]{FMSV}, which hinges on
a recursion for the Hilbert series of the Chow ring of an arbitrary matroid, defined by means
of maximal building sets, that is very similar to~\eqref{eq:recu}. It is tempting to venture that
a similar recursion may hold for Chow rings of some matroids w.r.t.~more general building 
sets (but simple examples show that $\gamma$-positivity need not hold for arbitrary building
sets). This would immediately imply $\gamma$-positivity for the corresponding Hilbert 
series. Theorem~\ref{thm:gpag} would be recovered as the particular case given by the braid 
matroid with respect to the minimal building set, cf.~\cite[\S5.2]{FMSV}. 
\qede\end{remark}

The first several polynomial $G_n(t):=\gamma(P_n)$ for $n\ge 3$ are
\begin{align*}
&1 \\
&1 \\
&1 + 3 t \\
&1 + 13 t \\
&1 + 38 t + 45 t^2 \\
&1+ 94 t + 423 t^2 \\
&1 + 213 t + 2425 t^2 + 1575 t^3 \\
&1 + 459 t + 11017 t^2 + 25497 t^3 \\
&1 + 960 t + 43768 t^2 + 240066 t^3 + 99225 t^4\saf. 
\end{align*}
It would be interesting to study these polynomials further. The polynomial $P_n$ is
real-rooted {\em if and only if\/} $G_n$ is real-rooted (\cite[Proposition~2.9]{FMSV}),
so in order to prove Conjecture~\ref{conj:rr}, it would suffice to prove that $G_n$ is
real-rooted for all $n\ge 3$. 

Using the recursion~\eqref{eq:recuga}, it is easy to show that the formal power series
\[
G(z):=\sum_{n\ge 3} G_n \frac{z^{n-1}}{(n-1)!}
\]
is the unique solution of the differential equation
\[
\frac{dG}{dz}=\frac{z+G}{1-tG}
\]
satisfying $G(0)=0$.


\section{The coefficient of $\Lbb^k$ in $[\ocaM_{0,n}]$}\label{sec:difeq}

Keel's recursion (\cite[p.~550]{MR1034665}) relates the betti numbers $a_{k,n}$ of 
$\ocaM_{0,n}$ to the numbers~$a_{\ell,m}$ for all $0\le \ell\le k$, $3\le m< n$.
This does not suffice for investigating log-concavity, since we need specific information 
about $a_{k,n}$ for individual $k$. In this section we obtain a precise description of
the corresponding generating functions, from which we will extract in~\S\ref{sec:proof} 
the asymptotic behavior of $a_{k,n}=\rk H^{2k}(\ocaM_{0,n})$ for fixed $k$, as $n\to \infty$.
Our result below appears to be new in this form, notwithstanding the very extensive 
literature on the cohomology of $\ocaM_{0,n}$.

As in the introduction, set 
\[
\alpha_k(z)=\sum_{n\ge 3} a_{k,n}\frac{z^{n-1}}{(n-1)!}
=\sum_{n\ge 3} \rk H^{2k}(\ocaM_{0,n})\frac{z^{n-1}}{(n-1)!} \saf,
\]
a generating function for the coefficients of $\Lbb^k$ in $[\ocaM_{0,n}]$.

\begin{theorem}\label{thm:main2sharp}
We have $\alpha_0(z)=e^z-(z+1)$. For all $k>0$,
\begin{equation}\label{eq:alphak}
\alpha_k(z) = \frac{(k+1)^k}{(k+1)!}\, e^{(k+1)z} 
+ e^z\sum_{m=1}^k (-1)^m p_m^{(k)}(z)\, e^{(k-m) z}
\end{equation}
where $p_m^{(k)}(z)\in \Qbb[z]$, $1\le m\le k$, is a polynomial of degree $2m$ with 
positive leading coefficient.
\end{theorem}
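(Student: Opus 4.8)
The plan is to run an induction on $k$, converting the recursion~\eqref{eq:recu} into a first-order linear ODE for $\alpha_k(z)$ whose inhomogeneous term is built from $\alpha_0,\dots,\alpha_{k-1}$, and then to solve that ODE by the standard integrating-factor method, verifying that the resulting closed form has the shape~\eqref{eq:alphak}. First I would extract from~\eqref{eq:recu} the relation satisfied by the coefficients of $\Lbb^k$: comparing coefficients of $\Lbb^k$ on both sides gives
\[
a_{k,n} = a_{k,n-1} + a_{k-1,n-1} + \sum_{i=3}^{n-2}\binom{n-2}{i-1}\sum_{j+\ell=k-1} a_{j,i}\,a_{\ell,n+1-i}\saf.
\]
Translating into the exponential generating functions $\alpha_k(z)=\sum_{n\ge 3} a_{k,n} z^{n-1}/(n-1)!$, the term $a_{k,n-1}$ produces $\alpha_k'(z)$ on the left after differentiating (or rather $\alpha_k$ appears as $\alpha_k'=\alpha_k+\cdots$), the term $a_{k-1,n-1}$ contributes $\alpha_{k-1}(z)$, and the binomial convolution $\binom{n-2}{i-1}$ is exactly the one that makes a product of EGF's of the shifted type appear — so the double sum becomes $\sum_{j+\ell=k-1}\alpha_j(z)\alpha_\ell(z)$ up to a bookkeeping of the index shift by one (the $z^{n-1}/(n-1)!$ normalization versus $z^{i-1}/(i-1)!\cdot z^{n-i}/(n-i)!$, which is why the binomial is $\binom{n-2}{i-1}$ rather than $\binom{n-1}{i-1}$). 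Carrying this out carefully yields
\[
\frac{d\alpha_k}{dz} = \alpha_k(z) + \alpha_{k-1}(z) + \sum_{j+\ell=k-1}\alpha_j(z)\,\alpha_\ell(z)\saf,\qquad \alpha_k(0)=0\saf,
\]
and one checks the base case $\alpha_0' = \alpha_0 + 1 + 0$ with $\alpha_0(0)=0$, giving $\alpha_0(z)=e^z-(z+1)$ as claimed, and also that the displayed ODE for $\alpha_2$ in the excerpt is the $k=2$ instance.

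Next, assuming inductively that each $\alpha_j$ for $j<k$ has the form~\eqref{eq:alphak} — a single ``top'' exponential $\tfrac{(j+1)^j}{(j+1)!}e^{(j+1)z}$ plus lower terms $e^z p_m^{(j)}(z)e^{(j-m)z}$ with $\deg p_m^{(j)}=2m$ — I would substitute into the inhomogeneous term. The key numeric observation is that the product $\alpha_j\alpha_\ell$ with $j+\ell=k-1$ has top exponential $e^{(j+1)z}e^{(\ell+1)z}=e^{(k+1)z}$ with coefficient $\tfrac{(j+1)^j}{(j+1)!}\tfrac{(\ell+1)^\ell}{(\ell+1)!}$; summing over $j+\ell=k-1$, a Vandermonde/Abel-type identity ($\sum_{j+\ell=k-1}\binom{k-1}{j}(j+1)^{j-1}(\ell+1)^{\ell-1}$-style, i.e.\ a tree-counting identity of Cayley type) must reproduce $\tfrac{(k+1)^{k}}{(k+1)!}$ — more precisely, one needs
\[
\sum_{j+\ell=k-1}\frac{(j+1)^j}{(j+1)!}\cdot\frac{(\ell+1)^\ell}{(\ell+1)!}
\]
to be exactly the right number so that the particular solution picks up top coefficient $\tfrac{(k+1)^k}{(k+1)!}$. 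Indeed, when solving $\alpha_k'-\alpha_k = (\text{term with top } c\,e^{(k+1)z})$ by integrating factor $e^{-z}$, the exponential $e^{(k+1)z}$ is not resonant (the homogeneous solution is $e^z$), so it integrates to $\tfrac{c}{k}e^{(k+1)z}$, and one must verify $\tfrac{1}{k}\sum_{j+\ell=k-1}\tfrac{(j+1)^j}{(j+1)!}\tfrac{(\ell+1)^\ell}{(\ell+1)!}=\tfrac{(k+1)^k}{(k+1)!}$; this is precisely the generating-function identity for the Lambert~$W$ function coefficients (equivalently $\sum \binom{k-1}{j}(j+1)^{j-1}(\ell+1)^{\ell-1}=(k+1)^{k-1}\cdot\tfrac{k!}{?}$), which I expect to be the main combinatorial obstacle and the place where Cayley's formula / the functional equation $W e^W = t$ enters — it is the same identity underlying Theorem~\ref{thm:main2} and the Lambert~$W$ reformulation quoted in the introduction.

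For the remaining (lower) exponentials $e^{rz}$ with $r\le k$: the inhomogeneous term is a finite sum of products $e^{rz}Q(z)$ with $Q\in\Qbb[z]$, obtained by multiplying out the polynomial factors; the degree of the polynomial coefficient of $e^{(k-m)z}$ in $\alpha_k$ is governed by adding the degrees of the constituent polynomials in the convolution, and one checks that the worst case is degree $2m$ (the case where all the ``$+1$'' shifts of the exponent are used up exactly once each). Solving $\alpha_k'-\alpha_k=e^{rz}Q(z)$: when $r\ne 1$, this contributes $e^{rz}\tilde Q(z)$ with $\deg\tilde Q=\deg Q$ (no degree increase, since $\int e^{(r-1)z}Q(z)\,dz = e^{(r-1)z}\tilde Q(z)$ with $\deg\tilde Q=\deg Q$); when $r=1$ (the resonant case, which produces the $m=k$ term $e^z p_k^{(k)}(z)$, exponent $k-m=0$), integration raises the degree by one, $\int Q(z)\,dz$, so $\deg p_k^{(k)}= \deg(\text{incoming poly})+1$, and tracking this through shows $\deg p_k^{(k)}=2k$. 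The initial condition $\alpha_k(0)=0$ fixes the single constant of integration (the coefficient of the homogeneous solution $e^z$), which gets absorbed into $p_k^{(k)}(z)$'s constant term. Finally, positivity of the leading coefficient of $p_m^{(k)}$ follows by induction: the leading coefficients of the constituent polynomials are positive, products and sums of positives are positive, and integration (dividing by the new degree) preserves positivity — so no cancellation can occur at top degree. I would organize the write-up as: (1) derive the ODE; (2) do the $\alpha_0$ base case; (3) the inductive step, splitting the particular solution into the top exponential (invoking the Cayley/Lambert identity) and the lower exponentials (degree and positivity bookkeeping); (4) pin down the constant via $\alpha_k(0)=0$.
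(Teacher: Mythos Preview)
Your overall strategy---derive a first-order linear ODE for $\alpha_k$ with inhomogeneous term built from $\alpha_0,\dots,\alpha_{k-1}$, solve by integrating factor, then track the top exponential via a Lambert/Cayley identity and bookkeep degrees for the lower ones---is the same as the paper's. But the ODE you write down is wrong, and this breaks the rest of the argument as stated.

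The binomial $\binom{n-2}{i-1}$ in~\eqref{eq:recu} is \emph{not} the symmetric EGF convolution for the normalization $z^{n-1}/(n-1)!$: writing $p=n+1-i$, the summand becomes $\frac{a_{j,i}}{(i-1)!}\cdot\frac{a_{\ell,p}}{(p-2)!}\,z^{i+p-3}$, so the convolution is $\alpha_j\,\alpha_\ell'$, not $\alpha_j\,\alpha_\ell$. Equivalently, from $(1-\Lbb M)M'=z+(1+\Lbb)M$ one reads off $\alpha_k'=\alpha_k+\alpha_{k-1}+\sum_{j+\ell=k-1}\alpha_j\alpha_\ell'$ for $k\ge 1$, and $\alpha_0'=\alpha_0+z$ (not $\alpha_0+1$). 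You can check your version already fails at $k=1$: it gives $\alpha_1'=\alpha_1+e^{2z}-2ze^z-e^z+z^2+z$, whereas the correct equation (matching the paper's display) is $\alpha_1'=\alpha_1+e^{2z}-ze^z-e^z$. Consequently the identity you need at top order is not the symmetric one you wrote; with the correct ODE it becomes
\[
\frac{1}{k}\sum_{j+\ell=k-1}\frac{(j+1)^j}{(j+1)!}\cdot\frac{(\ell+1)^{\ell+1}}{(\ell+1)!}
=\frac{(k+1)^k}{(k+1)!}\saf,
\]
which is indeed the Lambert/tree-function identity (the paper proves an equivalent multilinear version by expanding $\frac{1}{1-\Lbb M}$ as a geometric series instead, see its Lemma on the coefficient of $e^{(k+1)z}$). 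Your symmetric version is false already at $k=2$.

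One further caution: the degree and sign bookkeeping is more delicate than your sketch suggests, because $\alpha_0=e^z-(z+1)$ does \emph{not} have the form~\eqref{eq:alphak}---it contains a genuine polynomial term---so products involving powers of $\alpha_0$ produce extra $(z+1)^t$ factors that shift degrees. The paper handles this by a careful case analysis (its Claim~\ref{claim:industep}), separately treating the exponents $r=0$, $r=1$ (the resonant case, where one must show the incoming degree is exactly $2k-1$ so that integration gives $2k$), and $1<r\le k+1$. Your outline glosses over why no cancellation occurs at top degree when several contributions are summed; with the corrected ODE and the $\alpha_0$ subtlety in hand this can be done, but it is the bulk of the work.
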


\begin{proof}
Since $\rk H^0(\ocaM_{0,n})=1$ for all $n\ge 3$, we have 
$\alpha_0(z)=\sum_{n\ge 3}\frac{z^{n-1}}{(n-1)!}=e^z-(1+z)$ as stated.
Next, consider the formal power series
\[
M(z):=\sum_{n\ge 3} [\ocaM_{0,n}]\frac{z^{n-1}}{(n-1)!}
\]
with coefficients in $K(\Var_k)$. The recursion \eqref{eq:recu} implies easily that $M$
satisfies the following differential equation:
\begin{equation}\label{eq:difeq}
\frac{dM}{dz}=\frac{z+(1+\Lbb) M}{1- \Lbb M}\saf.
\end{equation}
({\em Mutatis mutandis,\/} this is equivalent to~\cite[(0.8)]{MR1363064}.)
The function $\alpha_k(z)$ are the coefficients of $M$ as a power series in $\Lbb$:
\[
M=\alpha_0(z)+\alpha_1(z)\Lbb +\alpha_2(z)\Lbb^2 +\cdots\saf.
\]
Imposing that this series satisfies~\eqref{eq:difeq} and reading off the coefficients
of $\Lbb^k$ gives us
differential equations for these coefficients. The first few such equations are
\begin{align*}
\frac{d\alpha_0}{dz}&= \alpha_0+z \\
\frac{d\alpha_1}{dz}&= \alpha_0^2 + \alpha_0z + \alpha_0 + \alpha_1 \\
\frac{d\alpha_2}{dz}&= \alpha_0^3 + \alpha_0^2 z + \alpha_0^2 + 2 \alpha_0 \alpha_1 + 
\alpha_1 z + \alpha_1 + \alpha_2 \\
\frac{d\alpha_3}{dz}&= \alpha_0^4 +  \alpha_0^3 z +  \alpha_0^3 + 3  \alpha_0^2  \alpha_1 + 
2  \alpha_0  \alpha_1 z + 2  \alpha_0  \alpha_1 + 2  \alpha_0  \alpha_2 +  \alpha_1^2 
+  \alpha_2 z +  \alpha_2 +  \alpha_3 \\
&\cdots
\end{align*}
and solving them recursively, they take the form
\begin{align*}
\frac{d\alpha_0}{dz}&= \alpha_0+z \\
\frac{d\alpha_1}{dz}&= \alpha_1+e^{2z}-e^zz-e^z \\
\frac{d\alpha_2}{dz}&= \alpha_2+3 e^{3z} - (z^2+5z+5) e^{2z} + \frac{z^3+5z^2+8z+4}2 e^z \\
&\cdots.
\end{align*}

The theorem will be an easy consequence of the following result.

\begin{lemma}\label{lem:difeqform}
For $k\ge 1$, the function $\alpha_k(z)$ satisfies a differential equation of the form
\begin{equation}\label{eq:gfdfindu}
\frac{d\alpha_k}{dz} = \alpha_k+k\,\frac{(k+1)^k}{(k+1)!}\,
e^{(k+1)z} +e^z\sum_{m=1}^k (-1)^m f_m^{(k)}(z)\, e^{(k-m) z}
\end{equation}
where $f_m^{(k)}(z)\in\Qbb[z]$ denotes a polynomial of degree $2m$ with positive 
leading coefficient for $m=1,\cdots,k-1$, and $f_k^{(k)}(z)\in \Qbb[z]$ is a polynomial
of degree $2k-1$ with positive leading coefficient.
\end{lemma}

To see that~\eqref{eq:gfdfindu} implies~\eqref{eq:alphak}, set $\alpha_k=e^z A_k$; by~\eqref{eq:gfdfindu},
\[
\frac{dA_k}{dz} = k\,\frac{(k+1)^k}{(k+1)!}\,
e^{kz}+\sum_{m=1}^k (-1)^m f_m^{(k)}(z) e^{(k-m) z}\saf,
\]
from which
\[
A_k=\frac{(k+1)^k}{(k+1)!}\,e^{kz}+\sum_{m=1}^k (-1)^m p_m^{(k)}(z) e^{(k-m)z}
\]
where $p_m^{(k)}(z)\in \Qbb[z]$ are determined by integration by parts and we
absorb the constant of integration in the summation. For $k-m>0$, 
$\deg p_m^{(k)}=\deg f_m^{(k)}=2m$; for $m=k$, $\deg p_m^{(k)}
=1+\deg f_k^{(k)}=2k$. The leading coefficient of $p_m^{(k)}(z)$ has the same 
sign as the leading coefficient of $f_m^{(k)}$. 
The expression~\eqref{eq:alphak} for $\alpha_k=e^z A_k$ given in 
Theorem~\ref{thm:main2sharp} follows.\smallskip

Therefore, we only need to prove Lemma~\ref{lem:difeqform}.

\begin{proof}[Proof of Lemma~\ref{lem:difeqform}]
For all $i>0$, consider the two statements
\begin{gather}
\tag{$L_i$} 
\frac{d\alpha_i}{dz} = \alpha_i+ e^z\sum_{m=0}^i (-1)^m f_m^{(k)}(z)\, e^{(k-m) z} \\
\tag{$T_i$}
\alpha_i(z) = e^z\sum_{m=0}^i (-1)^m p_m^{(k)}(z)\, e^{(i-m) z}
\end{gather}
where $f_0^{(k)}(z)=k\,\frac{(k+1)^k}{(k+1)!}$, $p_0^{(k)}(z)=\frac{(k+1)^k}{(k+1)!}$,
and the other polynomials $f_m(z)$, $p_m(z)$ satisfy the conditions listed
in Lemma~\ref{lem:difeqform} and Theorem~\ref{thm:main2sharp}.

We have to prove that $(L_k)$ holds for all $k>0$. As shown above, $(L_1)$
and $(L_2)$ hold. We work by strong induction.
By the argument preceding this proof, $(L_i)\implies (T_i)$. Therefore, in proving
$(L_k)$ we may assume the truth of both~$(L_i)$ and $(T_i)$ for all $1\le i<k$,
as well as the expression $\alpha_0(z)=e^z-(z+1)$, which we have already 
verified.

Rewrite~\eqref{eq:difeq} as
\begin{align*}
\frac{dM}{dz} &= \frac{z + (1+\Lbb) M}{1-\Lbb M}
=(z+M)+\Lbb M\frac{1+z+M}{1-\Lbb M} \\
&= z+ M + \sum_{\ell\ge 1} \Lbb^\ell M^\ell (1+z+M)\saf.
\end{align*}
The equation satisfied by the coefficient of $\Lbb^k$ for $k>0$ is
\begin{equation}\label{eq:preq}
\frac{d\alpha_k}{dz} = \alpha_k+\sum_{\ell=1}^k \text{coefficient of $\Lbb^{k-\ell}$ in }
M^\ell (1+z+M)\saf.
\end{equation}
The coefficients of $\Lbb^{k-\ell}$ in $M^\ell$ and $M^{\ell+1}$ are respectively
\begin{equation}\label{eq:twoterms}
\sum_{i_1+\cdots + i_\ell=k-\ell} \alpha_{i_1}\cdots \alpha_{i_\ell}\saf;
\quad
\sum_{i_1+\cdots + i_{\ell+1}=k-\ell} \alpha_{i_1}\cdots \alpha_{i_{\ell+1}}\saf.
\end{equation}
Since $\ell\ge 1$, the expressions only involve terms $\alpha_i$ with $i<k$, 
which by induction may be assumed to satisfy $(T_i)$ for $i>0$ and equal
$e^z-(z+1)$ for $i=0$.

It is clear (by induction) that the summation in~\eqref{eq:preq} is a linear combination of
exponentials with coefficients in $\Qbb[z]$. In order to prove~$(L_k)$, 
we have to prove the following.

\begin{claim}\label{claim:industep}
With notation as above, the coefficient of~$e^{rz}$ in
\begin{equation}\label{eq:bigsum}
\sum_{\ell=1}^k \left((1+z) \sum_{i_1+\cdots+i_\ell=k-\ell} \alpha_{i_1}\cdots \alpha_{i_\ell}
+\sum_{i_1+\cdots+i_{\ell+1}=k-\ell} \alpha_{i_1}\cdots \alpha_{i_{\ell+1}}\right)
\end{equation}
equals 
\[
\begin{cases}
0\quad & \text{if $r<1$ or $r>k+1$;} \\
\text{a polynomial of degree $2k-1$ and sign of l.c.~$(-1)^k$}\quad &
\text{if $r=1$;} \\
\text{a polynomial of degree $2(k+1-r)$ and sign of l.c.~$(-1)^{k+1-r}$}\quad &
\text{if $1< r\le k+1$}\saf.
\end{cases}
\]
Further, the coefficient of the dominant term $e^{(k+1)z}$ equals 
$f_0^{(k)}=k\,\dfrac{(k+1)^k}{(k+1)!}$.
\end{claim}

Since our main application (to asymptotic log-concavity) concerns the dominant 
term, we focus on the last statement in Claim~\ref{claim:industep} first.
For this, note that by the induction hypothesis the two terms in~\eqref{eq:twoterms} 
respectively equal
\[
\left(\sum_{i_1+\cdots + i_\ell=k-\ell} \prod_{m=1}^\ell \frac{(i_m+1)^{i_m}}
{(i_m+1)!}\right) e^{kz} + \text{lower order terms}
\]
and
\[
\left(\sum_{i_1+\cdots + i_{\ell+1}=k-\ell} \prod_{m=1}^{\ell+1} \frac{(i_m+1)^{i_m}}
{(i_m+1)!}\right) e^{(k+1)z} + \text{lower order terms}
\]
where `lower order terms' stands for a linear combination with polynomial coefficients
of exponentials $e^{m z}$ with $m<k$, resp., $m<k+1$. Therefore, the equation satisfied
by $\alpha_k$ is
\begin{equation}\label{eq:difeqMk}
\frac{d\alpha_k}{dz} = \alpha_k+\sum_{\ell=1}^k\left(\sum_{i_1+\cdots + i_{\ell+1}=k-\ell} \prod_{m=1}^{\ell+1}
\frac{(i_m+1)^{i_m}}{(i_m+1)!}\right) e^{(k+1)z}+\text{lower order terms.}
\end{equation}

\begin{lemma}\label{lem:Lamb}
\[
\sum_{\ell=1}^k\left(\sum_{j_1+\cdots + j_{\ell+1}=k-\ell} \prod_{m=1}^{\ell+1}
\frac{(j_m+1)^{j_m}}{(j_m+1)!}\right) =k\frac{(k+1)^k}{(k+1)!}\saf.
\]
\end{lemma}

\begin{proof}
Let
\[
W(t):=\sum_{j\ge 0} \frac{(-(j+1))^j}{(j+1)!} t^{j+1}\saf.
\]
This is the {\em principal branch of the Lambert W function;\/} in particular,
\[
W(t) e^{W(t)}=t
\]
(see e.g., \cite[(3.1)]{MR1414285}). By implicit differentiation,
\[
\frac{dW}{dt}=\frac{W(t)}{t(1+W(t))}\saf,
\]
and it follows that
\begin{equation}\label{eq:Lambert}
t^2\frac d{dt}\left(\frac{W(t)}t\right)=-\frac{W(t)^2}{1+W(t)} = -W(t)^2+W(t)^3-W(t)^4+\cdots
\end{equation}
The coefficient of $t^{k+1}$ in the l.h.s.~of~\eqref{eq:Lambert} is
\[
(-1)^k\, k\,\frac{(k+1)^k}{(k+1)!}\saf.
\]
The coefficient of $t^{k+1}$ in the r.h.s.~equals the coefficient of $t^{k+1}$ in
\[
\sum_{\ell=1}^k (-1)^\ell W(t)^{\ell+1}\saf.
\]
Now $(j_1+1)+\cdots+(j_{\ell+1}+1)=k+1$ if and only if $j_1+\cdots+j_{\ell+1}=k-\ell$, therefore 
the coefficient of $t^{k+1}$ in $(-1)^\ell W(t)^{\ell+1}$ equals
\[
(-1)^\ell\sum_{j_1+\cdots+j_{\ell+1}=k-\ell} \prod_{m=1}^{\ell+1}
\frac{-(j_m+1)^{j_m}}{(j_m+1)!}
=(-1)^k\sum_{j_1+\cdots+j_{\ell+1}=k-\ell} \prod_{m=1}^{\ell+1}
\frac{(j_m+1)^{j_m}}{(j_m+1)!}
\]
and this concludes the proof.
\end{proof}

By Lemma~\ref{lem:Lamb}, we can rewrite~\eqref{eq:difeqMk} as
\[
\frac{d\alpha_k}{dz} = \alpha_k+k\,\frac{(k+1)^k}{(k+1)!}\,
e^{(k+1)z}+\text{lower order terms}
\]
and this concludes the verification that $f_0^{(k)}=k\,\dfrac{(k+1)^k}{(k+1)!}$
as stated in Claim~\ref{claim:industep}.

The rest of the proof of~Claim~\ref{claim:industep} is a straightforward, but 
somewhat involved, verification. If all $i_j$ are $<k$ and positive, then by the induction
hypothesis
\[
\alpha_{i_1}\cdots \alpha_{i_s}=e^{sz} \sum_{m=0}^{i_1+\cdots+i_s}(-1)^m g_m(z)
e^{(i_1+\cdots+i_s-m)z}
\]
with $g_m(z)\in \Qbb[z]$ a polynomial of degree $2m$ and positive leading
coefficient. The coefficient of $e^{rz}$ in this term is
\begin{equation}\label{eq:allnz}
\begin{cases}
0 \quad & \text{if $r<s$ or $r>i_1+\cdots+i_s+s$} \\
(-1)^{i_1+\cdots+i_s+s-r} g_{i_1+\cdots+i_s-(r-s)}(z) \quad & \text{if $s\le r\le i_1+\cdots+i_s+s$.}
\end{cases}
\end{equation}
On the other hand,
\[
\alpha_0^t = (e^z-(z+1))^t = \sum_{m=0}^t \binom tm (-1)^{t-m} (z+1)^{t-m} e^{mz}\saf,
\]
therefore the coefficient of $e^{rz}$ in $\alpha_0^t$ is
\begin{equation}\label{eq:allz}
\begin{cases}
0 \quad & \text{if $r<0$ or $r>t$} \\
\binom tr (-1)^{t-r} (z+1)^{t-r} \quad & \text{if $0\le r\le t$.}
\end{cases}
\end{equation}

We will frequently refer to~\eqref{eq:allnz} and~\eqref{eq:allz} in the rest of the proof.\smallskip

First, \eqref{eq:allnz} and~\eqref{eq:allz} imply that the coefficient of $e^{rz}$ 
in~\eqref{eq:bigsum} is possibly nonzero only if $0\le r\le k+1$. Indeed, the maximum 
exponent for $\alpha_0^t \alpha_{i_1}\cdots \alpha_{i_s}$, where all $i_j$ are positive, is
\[
t+ (i_1+\cdots+i_s)+s
\]
by~\eqref{eq:allnz} and~\eqref{eq:allz},
so for $t+s=\ell+1$ and $i_1+\cdots+i_s=\ell-k$ it equals $k+1$.\smallskip

Next, consider the case $r=0$,
that is, the term in~\eqref{eq:bigsum} not involving exponentials. By~\eqref{eq:allnz},
the only possibly nonzero contributions to $r=0$ in~\eqref{eq:bigsum} come from 
terms with all~$i_j$ equal to $0$. However, in this case $\sum i_j=0$, that is, $k-\ell=0$,
and the corresponding summands in~\eqref{eq:bigsum} are
\begin{equation}\label{eq:leqk}
(1+z) \alpha_0^k + \alpha_0^{k+1} = \alpha_0^k (1+z+\alpha_0)=(e^z-(z+1))^k e^z\saf.
\end{equation}
This is a multiple of $e^z$, therefore the contribution to $r=0$ vanishes, as
stated in Claim~\ref{claim:industep}.\smallskip

For $r=1$: By~\eqref{eq:allnz}, at most one index may be nonzero. If all indices equal~$0$,
then $\ell=k$ as in the previous case, the corresponding part of~\eqref{eq:bigsum} 
is~\eqref{eq:leqk}, and the coefficient of $e^z$ equals $(-1)^k(1+z)^k$.
For $1\le \ell< k$, the corresponding contribution to~\eqref{eq:bigsum} is
\[
\ell (1+z) \alpha_0^{\ell-1} \alpha_{k-\ell}+(\ell+1) \alpha_0^\ell \alpha_{k-\ell}
=\alpha_0^{\ell-1} \alpha_{k-\ell} \left((\ell+1)e^z-(1+z)\right)\saf.
\]
Now $\alpha_{k-\ell}$ is a multiple of $e^z$, so the coefficient of $e^z$ in this expression
is the coefficient in $-\alpha_0^{\ell-1} \alpha_{k-\ell} (1+z)$, that is,
\[
(-1)^k (1+z)^\ell p_{k-\ell}^{(k-\ell)}(z)\saf.
\]
These polynomials have degrees $k+1,\dots, 2k-1$ as $\ell=k-1,\dots, 1$.

The conclusion is that the coefficient of $e^z$ in~\eqref{eq:bigsum} has degree $2k-1$
and sign of leading coefficient $(-1)^k$, as stated in~Claim~\ref{claim:industep}.\smallskip

Finally, we consider the case $1<r\le k+1$. Each $\alpha_i$ with $i>0$ is a multiple
of $e^z$, so the coefficient of $e^{rz}$ is nonzero only for terms in~\eqref{eq:bigsum} 
with at most $r$ indices $i_j>1$. These terms are of two types. 
First, we have terms
\begin{equation}\label{eq:firan}
(1+z)\alpha_0^t \alpha_{i_1}\cdots \alpha_{i_s}
\end{equation}
with $s\le r$, all $i_j$ positive, $i_1+\cdots +i_s=k-\ell$, and $s+t=\ell$.
The maximum $r$ for which $e^{rz}$ appears in~\eqref{eq:firan} is
\[
t+(i_1+1)+\cdots+(i_s+1)=s+t + \sum i_j = \ell+k-\ell = k\saf.
\]
Therefore~\eqref{eq:firan} does not contribute to the coefficient of $e^{rz}$ if $r=k+1$.

For $1<r\le k$, \eqref{eq:allnz} and~\eqref{eq:allz} imply that the coefficient of $e^{rz}$
in~\eqref{eq:firan} equals
\[
\sum_{r_1+r_2=r} \binom t{r_1} (-1)^{k-r} (z+1)^{t-r_1+1}
g_{k-r-(t-r_1)}(z)\saf.
\]
For an individual summand to be nonzero we need $r_1\le t$, i.e., $r_1-t\le 0$, as well as 
$s\le r_2$, i.e., $r_1-t\le r-\ell$.
Each nonzero summand has degree
\[
t-r_1+1+2(k-r-(t-r_1))=2k-2r+1+r_1-t\saf;
\]
since $r_1-t\le \min(0,r-\ell)$ for nonzero summands, the maximum of this expression 
is
\[
2k-2r+1+\min(0,r-\ell)< 2(k-r+1)\saf.
\]
Therefore, if $1<r\le k$, the coefficient of $e^{rz}$ in each term of type~\eqref{eq:firan} 
is a polynomial of degree strictly less than $2(k-r+1)$.

The other possible type is
\begin{equation}\label{eq:secan}
\alpha_0^t \alpha_{i_1}\cdots \alpha_{i_s}
\end{equation}
with $s\le r$, all $i_j$ positive, $i_1+\cdots +i_s=k-\ell$, and $s+t=\ell+1$.
By~\eqref{eq:allnz} and~\eqref{eq:allz}, the coefficient of~$e^{rz}$ in this term equals
\[
\sum_{r_1+r_2=r} \binom t{r_1} (-1)^{k-r+1} (z+1)^{t-r_1}
g_{k+1-r-(t-r_1)}\saf.
\]
We argue as above: nonzero individual summands have $r_1-t\le 0$
and $s\le r_2$, i.e., $r_1-t\le r-(\ell+1)$. Now note that as $1<r$, for all $r$ the 
sum~\eqref{eq:bigsum} will include terms~\eqref{eq:secan} with $\ell+1\le r$.
For these terms, the condition $r_1-t\le 0$ implies the condition $s\le r_2$;
the degree of the summand,
\[
(t-r_1)+2(k+1-r-(t-r_1))=2(k+1-r)+(r_1-t)\saf,
\]
achieves its maximum for $r_1=t$ and equals $2(k+1-r)$. All these summands
are of the form
\[
(-1)^{k-r+1} g_{k+1-r}\saf,
\]
so the sign of their leading coefficient is $(-1)^{k+1-r}$.\smallskip

We conclude that, for $1<r\le k$, the coefficient of $e^{rz}$ in~\eqref{eq:bigsum}
is the sum of polynomials of degree $<2(k+1-r)$ obtained from terms of 
type~\eqref{eq:firan} and from terms of type~\eqref{eq:secan} with $\ell\ge r$,
and of polynomials of degree exactly $2(k+1-r)$ and sign of leading 
coefficient~$(-1)^{k+1-r}$, from terms of type~\eqref{eq:secan} with $\ell<r$.

Therefore in this case the coefficient of $e^{rz}$ in~\eqref{eq:bigsum} is a
polynomial of degree~$2(k+1-r)$ and sign of leading coefficient
$(-1)^{k+1-r}$, and this completes the verification of Claim~\ref{claim:industep}.
\end{proof}

This concludes the proof of Lemma~\ref{lem:difeqform} and therefore
of Theorem~\ref{thm:main2sharp}.
\end{proof}

Theorem~\ref{thm:main2sharp} identifies the degrees and signs of leading coefficients
of the coefficients~$p_m^{(k)}(z)$, $m=1,\dots,k$, in the expression
\[
\sum_{n\ge 3} \rk H^{2k}(\ocaM_{0,n})\frac{z^{n-1}}{(n-1)!} 
=\frac{(k+1)^k}{(k+1)!}\, e^{(k+1)z} 
+ e^z\sum_{m=1}^k (-1)^m p_m^{(k)}(z)\, e^{(k-m) z}\saf,
\]
valid for $k\ge 1$. The first several such coefficients are
\begin{align*}
p_1^{(1)} &=\frac 12 z^2 + z + 1 \\[10pt]
p_2^{(1)} &=z^2 + 3z + 2 \\
p_2^{(2)} &=\frac 18 z^4 + \frac 56 z^3 + 2 z^2 + 2 z + \frac 12 \\[10pt]
p_3^{(1)} &=\frac 94 z^2 + \frac{15}2 z + 5 \\
p_3^{(2)} &=\frac 12 z^4 + \frac{11}3 z^3 + 9 z^2 + 9z + 3 \\
p_3^{(3)} &=\frac 1{48} z^6 + \frac 7{24} z^5 + \frac{35}{24} z^4 + \frac 72 z^3 
+ \frac{17}4 z^2 + \frac 52 z + \frac 23 \\[10pt]
p_4^{(1)} &=\frac{16}3 z^2 + \frac{56}3 z + \frac{38}3 \\
p_4^{(2)} &=\frac{27}{16} z^4 + \frac{51}4 z^3 + \frac{129}4 z^2 + \frac{65}2 z 
+ \frac{45}4 \\
p_4^{(3)} &=\frac 16 z^6 + \frac{13}6 z^5 + \frac{21}2 z^4 + \frac{74}3 z^3 
+ 30 z^2 + 18 z + \frac{13}3 \\
p_4^{(4)} &=\frac 1{384} z^8 + \frac 1{16} z^7 + \frac 59 z^6 + \frac {49}{20} z^5 
+ \frac{289}{48} z^4 + \frac{103}{12} z^3 + \frac {85}{12} z^2 + \frac {19}6 z + \frac{13}{24} 
\end{align*}
The leading coefficients of these polynomials are positive by Theorem~\ref{thm:main2sharp},
but note that the polynomials themselves appear to be positive. In fact, numerical evidence 
suggests the following.

\begin{conj}\label{conj:posp}
For all $k\ge 1$, the polynomials $p_m^{(k)}$, $m=1,\dots,k$, have positive coefficients
and are log-concave with no internal zeros. All but $p^{(1)}_1$, $p^{(3)}_3$, 
$p^{(5)}_5$ are ultra-log-concave.
\end{conj}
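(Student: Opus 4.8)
The plan is strong induction on $k$, upgrading the proof of Lemma~\ref{lem:difeqform} from a statement about leading coefficients to one about entire coefficient sequences; correspondingly one strengthens the inductive hypothesis to assert, for all $i<k$, that every $p^{(i)}_m$ --- and hence every coefficient polynomial $g_b$ of a product $\alpha_{i_1}\cdots\alpha_{i_s}$ occurring in the proof of Claim~\ref{claim:industep} --- has positive coefficients and is log-concave with no internal zeros. The first step is to make the recursion of Lemma~\ref{lem:difeqform} fully explicit: tracking \emph{all} coefficients in~\eqref{eq:bigsum} through~\eqref{eq:allnz} and~\eqref{eq:allz}, one obtains, for each $r\in\{1,\dots,k+1\}$, a formula for $f^{(k)}_{k+1-r}$ as a $\Zbb$-linear combination of products $(1+z)^a g_{b_1}\cdots g_{b_t}$ in which --- after extracting the overall sign $(-1)^{k+1-r}$ --- the terms coming from~\eqref{eq:secan} carry sign $+1$ and those from~\eqref{eq:firan} carry sign $-1$; then $p^{(k)}_m$ is recovered from $f^{(k)}_m$ by plain antidifferentiation when $m=k$ and by the integration-by-parts series $p^{(k)}_m=\sum_{j\ge 0}(-1)^j (f^{(k)}_m)^{(j)}/(k-m)^{\,j+1}$ when $m<k$.

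Granting the recursion, positivity of the coefficients of $p^{(k)}_m$ splits into two sub-problems. The first is a coefficient-wise domination inequality: inside $f^{(k)}_m$, the total contribution of the $+1$-terms (type~\eqref{eq:secan}) should dominate, coefficient by coefficient, the total contribution of the $-1$-terms (type~\eqref{eq:firan}). The inequality for the \emph{top} coefficient is exactly what Claim~\ref{claim:industep} proves; to extend it I would pair each term $(1+z)\alpha_0^t\alpha_{i_1}\cdots\alpha_{i_s}$ with the term $\alpha_0^{t+1}\alpha_{i_1}\cdots\alpha_{i_s}$ built from the same composition, reducing matters to term-by-term comparisons of $\alpha_0^{t+1}=\alpha_0^t(e^z-(z+1))$ against $(1+z)\alpha_0^t$ once expanded in the $e^{rz}$, plus bookkeeping over the unpaired type-\eqref{eq:secan} terms with all indices positive. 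The second sub-problem is that the (a priori alternating) integration-by-parts series nevertheless has positive coefficients: since $\deg f^{(k)}_m=2m$ while the damping exponent $k-m$ is $\ge 1$, pairing consecutive terms reduces this to inequalities of the shape $(k-m)\,b_i\ge (i+1)\,b_{i+1}$ among the (inductively positive and log-concave) coefficients $b_i$ of $f^{(k)}_m$; one then checks separately that the constant term $p^{(k)}_k(0)$, which is pinned by $\alpha_k(0)=0$ rather than by integration, is positive.

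Log-concavity and ultra-log-concavity are the serious obstacle. Products behave well --- the class of polynomials with nonnegative coefficients that are log-concave with no internal zeros is closed under multiplication, which handles each individual product $(1+z)^a g_{b_1}\cdots g_{b_t}$ --- but $f^{(k)}_m$ is a \emph{sum} of such products with a built-in \emph{difference} (type~\eqref{eq:secan} minus type~\eqref{eq:firan}), and neither operation preserves log-concavity. The cleanest route would be to prove the stronger assertion, strongly supported by the tables (e.g.\ $p^{(1)}_2=(z+1)(z+2)$, and $p^{(1)}_3$, $p^{(1)}_4$ are likewise real-rooted), that $p^{(k)}_m$ is real-rooted for all $(m,k)$ outside $\{(1,1),(3,3),(5,5)\}$: real-rootedness with nonnegative coefficients forces log-concavity with no internal zeros and, via Newton's inequalities (\cite[Theorem~2]{MR1110850}), ultra-log-concavity, so the conjecture would reduce to this real-rootedness statement together with the finite check that $p^{(1)}_1$, $p^{(3)}_3$, $p^{(5)}_5$ are log-concave. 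Real-rootedness of the products is automatic, so one would need to propagate it through the sums and the difference --- presumably by exhibiting interlacing/compatibility among the polynomials entering a fixed $f^{(k)}_m$, in the spirit of the theory of stable polynomials --- and then to control the integration step, where the operator $(1+(k-m)^{-1}D)^{-1}$ does not preserve real-rootedness in general but may do so for the specific, highly structured $f^{(k)}_m$ that arise.

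The main obstacle I anticipate is exactly this propagation: controlling (ultra-)log-concavity, or real-rootedness, across the sum-minus-difference structure of the recursion and then across integration by parts. That the conjecture must exclude $p^{(1)}_1$, $p^{(3)}_3$, $p^{(5)}_5$ from ultra-log-concavity shows no naive inductive invariant is exactly preserved, so a successful proof will likely require a sharper structural input --- the interlacing picture above, a treatment of the homogenizations via Lorentzian polynomials (for which the subtracted type-\eqref{eq:firan} term is the obstruction), or a combinatorial tree-theoretic model for $p^{(k)}_m(z)$, consistent with the appearance of $(k+1)^k/(k+1)!$ and the Lambert $W$ function, that would make both positivity and log-concavity manifest.
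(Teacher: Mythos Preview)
The statement you are addressing is Conjecture~\ref{conj:posp}, and the paper does \emph{not} prove it: it is presented purely as an open problem supported by numerical evidence, with the paper remarking only that ``one could attempt to determine generating functions for these coefficients, by a more careful analysis of the induction proving Lemma~\ref{lem:difeqform}.'' There is therefore no proof in the paper to compare your proposal against. Your submission is, appropriately, a \emph{strategy} rather than a proof, and you are candid about the obstacles; but it should be understood that even a fully successful execution would be new mathematics, not a reconstruction of something the authors did.

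On the substance of the strategy, two concrete gaps stand out beyond the ones you already flag. First, in the positivity step for $m<k$, you reduce positivity of the integration-by-parts series to inequalities of the form $(k-m)\,b_i\ge (i+1)\,b_{i+1}$ among the coefficients of $f^{(k)}_m$. Log-concavity with no internal zeros does not imply such a decay rate; you would need an additional quantitative bound on the ratios $b_{i+1}/b_i$, and nothing in the inductive hypothesis supplies one. Second, the pairing you propose between the type-\eqref{eq:firan} term $(1+z)\alpha_0^t\alpha_{i_1}\cdots\alpha_{i_s}$ and the type-\eqref{eq:secan} term $\alpha_0^{t+1}\alpha_{i_1}\cdots\alpha_{i_s}$ does not respect the multiplicities with which these terms occur in~\eqref{eq:bigsum}: the former arises from a length-$\ell$ composition with $s+t=\ell$, the latter from a length-$(\ell+1)$ composition with $s+(t+1)=\ell+1$, and the combinatorial weights (the number of ordered placements of the zero indices) differ, so coefficient-wise domination is not automatic from this pairing alone. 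The overall architecture---upgrade the induction of Lemma~\ref{lem:difeqform} and aim for real-rootedness via interlacing---is reasonable, but as you yourself note, the exclusion of $p^{(1)}_1$, $p^{(3)}_3$, $p^{(5)}_5$ from ultra-log-concavity already signals that no clean invariant propagates, and the proposal does not yet contain the new idea that would overcome this.
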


One could attempt to determine generating functions for these coefficients, by a more
careful analysis of the induction proving Lemma~\ref{lem:difeqform}. For instance,
the following should hold:
\[
\sum_{k\ge 1} p_k^{(k)} t^k = \frac 1{e^z} \frac 1{(1+t)^{1/t}(1-t(z+1))^{1/t}}\saf.
\]


\section{Proof of Theorems~\ref{thm:main} and~\ref{thm:main2}}\label{sec:proof}

Theorem~\ref{thm:main2} follows easily from Theorem~\ref{thm:main2sharp}.
In fact, Theorem~\ref{thm:main2sharp} implies the following more precise statement.
Denote by $c_{mj}^{(k)}\in \Qbb$ the coefficients of $p_m^{(k)}(z)$:
\[
p_m^{(k)}(z)=\sum_{j=0}^{2m} c_{mj}^{(k)} z^j\saf.
\]

\begin{theorem}\label{thm:main2again}
Let $n\ge 3$. For every $k\ge 1$:
\[
a_{k,n} = \rk H^{2k}(\ocaM_{0,n})=\frac{(k+1)^{k+n-1}}{(k+1)!} 
+\sum_{m=1}^k (-1)^m\sum_{j=0}^{2m} \binom{n-1}j\, c_{mj}^{(k)}\, j!
(k-m+1)^{n-1-j}\saf.
\]
\end{theorem}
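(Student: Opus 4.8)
The plan is to extract the coefficient of $\frac{z^{n-1}}{(n-1)!}$ from the closed-form expression for $\alpha_k(z)$ given in Theorem~\ref{thm:main2sharp}, using the elementary fact that for a polynomial $p(z)=\sum_j c_j z^j$ and a constant $\lambda$, the coefficient of $\frac{z^{n-1}}{(n-1)!}$ in $p(z)e^{\lambda z}$ is $\sum_j c_j\, j!\,\binom{n-1}{j}\,\lambda^{n-1-j}$. This follows immediately by writing $z^j e^{\lambda z}=z^j\sum_{t\ge 0}\lambda^t \frac{z^t}{t!}$ and collecting the $z^{n-1}$ term, since $\frac{z^j\cdot z^{n-1-j}}{(n-1-j)!}=j!\binom{n-1}{j}\frac{z^{n-1}}{(n-1)!}$. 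I would state this as a one-line lemma (or just inline it), noting it is valid for all $n\ge 1$ with the convention that $\binom{n-1}{j}=0$ when $j>n-1$, so there is no issue with small $n$ versus the degree $2m$ of $p_m^{(k)}$.

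Next I would apply this term by term to the expression
\[
\alpha_k(z) = \frac{(k+1)^k}{(k+1)!}\, e^{(k+1)z}
+ \sum_{m=1}^k (-1)^m p_m^{(k)}(z)\, e^{(k-m+1) z}\saf,
\]
where I have combined the leading $e^z$ with $e^{(k-m)z}$ to get $e^{(k-m+1)z}$, and written $p_m^{(k)}(z)=\sum_{j=0}^{2m} c_{mj}^{(k)} z^j$ as in the statement. The first term contributes $\frac{(k+1)^k}{(k+1)!}\cdot(k+1)^{n-1}=\frac{(k+1)^{k+n-1}}{(k+1)!}$, since it has no polynomial factor (equivalently $j=0$ only). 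Each term in the sum contributes $(-1)^m\sum_{j=0}^{2m} c_{mj}^{(k)}\, j!\,\binom{n-1}{j}\,(k-m+1)^{n-1-j}$ by the lemma with $\lambda=k-m+1$. Adding these up gives exactly the claimed formula for $a_{k,n}=\rk H^{2k}(\ocaM_{0,n})$, which is the coefficient of $\frac{z^{n-1}}{(n-1)!}$ in $\alpha_k(z)$ by definition of the generating function.

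There is essentially no obstacle here: the entire content is packaged in Theorem~\ref{thm:main2sharp}, and what remains is the purely formal coefficient extraction from an exponential generating function. The only point requiring a word of care is the indexing of the exponents — making sure the substitution $e^z\cdot e^{(k-m)z}=e^{(k-m+1)z}$ matches the base $k-m+1$ appearing in the statement (so that $m=k$ gives base $1$, consistent with Theorem~\ref{thm:main2} and Remark~\ref{rem:clo}), and that the $m=0$ term is correctly separated out as the dominant contribution. I would write the proof in three or four sentences: state the coefficient-extraction identity, apply it to the two pieces of~\eqref{eq:alphak}, and collect terms to land on the displayed formula.
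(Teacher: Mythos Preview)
Your proposal is correct and follows exactly the paper's own approach: the paper's proof is the two-sentence observation that $a_{k,n}$ is by definition the coefficient of $\frac{z^{n-1}}{(n-1)!}$ in $\alpha_k(z)$, and that the stated formula follows from Theorem~\ref{thm:main2sharp}. Your write-up simply spells out the coefficient extraction in more detail than the paper does.
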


\begin{proof}
By definition, $a_{k,n}$ is the coefficient of $\frac{z^{n-1}}{(n-1)!}$ in the expansion of 
$\alpha_k(z)$. The stated formula follows from Theorem~\ref{thm:main2sharp}.
\end{proof}

\begin{remark}\label{rem:clo}
By Theorem~\ref{thm:main2sharp},
$c_{m,2m}^{(k)}>0$. By Theorem~\ref{thm:main2again}, 
\[
\rk H^{2k}(\ocaM_{0,n})=\frac{(k+1)^{k-1}}{(k+1)!}\cdot (k+1)^n
+\sum_{m=1}^k (-1)^m q^{(k)}_m(n)\cdot (k+1-m)^n
\]
where
\[
q^{(k)}_m(n)=\sum_{j=0}^{2m} \frac{c_{mj}^{(k)}}{(k-m+1)^{j+1}} (n-1)\cdots (n-j)
\]
is a polynomial in $\Qbb[n]$ of degree $2m$ and with positive leading coefficient.

If the positivity claim in Conjecture~\ref{conj:posp} holds, then all coefficients $c_{mj}^{(k)}$ 
are positive for $k\ge 1$, $1\le m\le k$, $0\le j\le 2m$. 
\qede\end{remark}

\begin{example}
We have:
\begin{align*}
\rk H^2(\ocaM_{0,n}) &= \frac 12\cdot 2^n - \frac{n^2 - n + 2}2	
\quad\qquad\qquad\qquad\qquad\qquad\qquad\qquad
\text{(cf.~\cite[p.~550]{MR1034665})}  \\
\rk H^4(\ocaM_{0,n}) &=\frac 12\cdot 3^n - \frac{n^2 + 3n + 4}8\cdot 2^n 
+ \frac{3n^4 - 10n^3 + 33 n^2 - 26 n + 12}{24}
\end{align*} 
and
\begin{multline*}
\rk H^6(\ocaM_{0,n})=
\frac 23\cdot 4^n
-\frac{(n+4)(n+3)}{12}\cdot 3^n
+\frac{3n^4+14n^3+57n^2+118n+96}{192}\cdot 2^n \\
-\frac{n^6-7n^5+35n^4-77n^3+120n^2-72n+32}{48}\saf.
\end{multline*}
\end{example}

\begin{proof}[Proof of Theorem~\ref{thm:main2}]
The statement is true for $k=0$. For $k>0$, it is an immediate consequence of 
Theorem~\ref{thm:main2again}, since (with notation as in Remark~\ref{rem:clo})
\[
\lim_{n\to \infty} q_m^{(k)}(n) \frac{(k-m+1)^n}{(k+1)^n} =0
\]
for $1\le m\le k$.
\end{proof}

Finally, we deduce Theorem~\ref{thm:main} from Theorem~\ref{thm:main2}.

\begin{proof}[Proof of Theorem~\ref{thm:main}]
We verify the stronger claim that for any fixed $k>0$, the limit of the ratio
\begin{equation}\label{eq:ratio}
\left(\frac{a_{k,n}}{\binom{n-3}k}\right)^2\left/
\left(\frac{a_{k-1,n}}{\binom{n-3}{k-1}}\cdot \frac{a_{k+1,n}}{\binom{n-3}{k+1}}\right)\right.
\end{equation}
as $n\to \infty$ is $+\infty$. The terms involved in the ratio are of type
\[
a_{i,n}\left/\binom{n-3}i\right.\saf,
\]
and by Theorem~\ref{thm:main2} this is asymptotic to
\[
\frac{(i+1)^{i+n-1}}{(i+1)!}\left/\frac{(n-3)!}{i! (n-i-3)!}\right.
=\frac{i!(i+1)^{i+n-1} (n-i-3)!}{(i+1)!(n-3)!}
=\frac{(i+1)^{i+n-2} (n-i-3)!}{(n-3)!}\saf.
\]
Thus, the limit of~\eqref{eq:ratio} as $n\to \infty$ equals the limit of
\[
\left(\frac{(k+1)^{k+n-2} (n-k-3)!}{(n-3)!}\right)^2\left/
\left(\frac{k^{k+n-3} (n-k-2)!}{(n-3)!}\frac{(k+2)^{k+n-1} (n-k-4)!}{(n-3)!}
\right)\right.
\]
as $n\to \infty$. This expression equals
\[
\frac{(k+1)^{2(k+n-2)} (n-k-3)}{k^{k+n-3} (k+2)^{k+n-1} (n-k-2)}
=\frac{(k+1)^{2(k-2)}}{k^{k-3} (k+2)^{k-1} }
\cdot \left(\frac{(k+1)^{2}}{k (k+2)}\right)^n
\cdot \frac{n-k-3}{n-k-2}
\]
and converges to $\infty$ as claimed as $n\to \infty$, since $\frac{(k+1)^{2}}{k (k+2)}
=\frac{k^2+2k+1}{k^2+2k\hphantom{+1}}>1$.
\end{proof}


\newcommand{\etalchar}[1]{$^{#1}$}

\end{document}